\numberwithin{equation}{section}
\newcommand{\be}{\begin{equation}}
\newcommand{\ee}{\end{equation}}
\newcommand{\ds}{\displaystyle}
\newcommand{\iden}{\mathbbm{1}}
\newcommand{\Z}[1]{\mathbb{Z}_{#1}}
\renewcommand{\r}{|R|}
\newcommand{\Xu}[1]{X^{\text{(u)}}_{#1}}
\newcommand{\Prob}{\mathbb{P}}
\newcommand{\ann}{\mathrm{ann}}
\newcommand{\tauc}{\tau_{\text{couple}}}
\newtheorem{thm}{Theorem}[section]
\newtheorem{prop}[thm]{Proposition}
\newtheorem{cor}[thm]{Corollary}
\newtheorem{lem}[thm]{Lemma}
\newtheorem{defn}[thm]{Definition}
\newtheorem{rem}[thm]{Remark}
\theoremstyle{remark}
\newtheorem{example}[thm]{Example}
\title
{Random motion on finite rings, I: commutative rings}
\author{Arvind Ayyer}
\address{Arvind Ayyer, Department of Mathematics, 
Indian Institute of Science, Bangalore  560012, India.}
\email{arvind@iisc.ac.in}
\author{Pooja Singla}
\address{Pooja Singla, Department of Mathematics, 
Indian Institute of Science, Bangalore  560012, India.}
\email{pooja@iisc.ac.in}
\date{\today}
\begin{document}

\begin{abstract}
We consider  irreversible Markov chains on finite commutative rings randomly generated using both addition and multiplication. We restrict ourselves to the case where the addition is uniformly random and multiplication is arbitrary.
 We first prove formulas for eigenvalues and multiplicities of the transition matrices of these chains using the character theory of finite abelian groups. 
The examples of principal ideal rings (such as $\mathbb{Z}_{n})$ and finite chain rings (such as $\mathbb{Z}_{p^k})$ are particularly illuminating and are treated separately. 
We then prove a recursive formula for the stationary probabilities for any ring, and use it to prove explicit formulas for the probabilities for finite chain rings when multiplication is also uniformly random.
Finally, we prove constant mixing time for our chains using coupling. 

\end{abstract}
\subjclass[2010]{20C05, 13M05, 16W22, 60J10}
\keywords{finite commutative rings, Markov chains, semigroup algebras, spectrum, stationary distribution, mixing time, finite chain rings}

\maketitle

\section{Introduction}
\label{sec:intro}
Random walks on general groups are an extremely well-studied subject, and even those on finite groups have been explored in great detail, with some results appearing as early as the 1950s \cite{good-1951}.
The subject acquired a life of its own starting with the work of Diaconis and Shashahani \cite{diaconis-shahshahani-1981}, where probabilistic questions were answered by appealing to the representation theory of the symmetric group. 
See \cite{diaconis-1998,saloffcoste-2004} for generalizations in this direction.

A concept more general than a random walk is a Markov chain, wherein the probability of being in a future state depends on the past only through the present state. A random walk is then a Markov chain which has the additional property of reversibility (see Definition~\ref{def:revers}).
In parallel with random walks on groups, there has been a growing interest in Markov chains on finite semigroups and monoids, such as the Markov chain on the symmetric group known as the Tsetlin library \cite{tsetlin-1963,hendricks-1972}. A far-reaching generalization of the latter on hyperplane arrangements \cite{BHR-1999} led to a systematic study of Markov chains on monoids known as left-regular bands \cite{brown-2000}. This has since been extended to a more general class known as {$\mathscr{R}$}-trivial monoids \cite{asst-2015,steinberg-2016}.

In a similar vein, Markov chains on $\Z{n}$ 
\cite{chung-diaconis-graham-1987,hildebrand-1993,bate-connor-2018} and on $\Z{p}^k$ \cite{hildebrand-mccollum-2008, asci-2009a,asci-2009b}
 generated by affine random transformations have also been studied. A generalization in this direction is the recent study of very general Markov chains on modules of finite rings~\cite{ayyer-steinberg-2017}.

In this work, we study Markov chains on finite commutative rings generated simultaneously by both addition and multiplication operations as follows. At each step, we choose either to add or multiply the current state with an element of the ring according to a coin toss. The addition is done according to the uniform distribution on the ring, and multiplication according to an arbitrary distribution. Although we will mostly work on rings with identity, results for rings without identity can also be deduced similarly; see Remark~\ref{rem:rings-without-identity}.
We will be interested in the stationary distribution of these chains and their convergence here.
Results about Markov chains on noncommutative finite rings will appear in a subsequent work \cite{AS-2018}.

The plan of the article is as follows. We will give the basic definitions and summarize the main results in Section~\ref{sec:def}. 
We begin with preliminaries in Section~\ref{sec:prelim}. Readers familiar with the basics of finite commutative rings can skip this section.
In Section~\ref{sec:spectrum}, we will prove a general formula for the eigenvalues (and multiplicities) of the transition matrix of the chain.
This is related to the Markov chains on semigroups stated above; see the discussion after Proposition~\ref{prop:Bn}. 
In Section~\ref{sec:stat-dist}, we will prove the formula for the stationary distribution for general rings. 
Lastly, we will show that our chains mix in constant time in Section~\ref{sec:mixing-zpk}. We will end with related open questions in Section~\ref{sec:open}.
 
\section{Definitions and summary of results}
\label{sec:def}

Let $R$ be a finite commutative ring with identity and let $|R|$ denote its cardinality.
We will define a discrete-time Markov chain $(X_n)_{n \in \mathbb{Z}_+}$ with state space $R$ which uses its ring structure. 
The informal description of the chain is as follows. Suppose we are at a certain state $r \in R$ at some time. At the next time step, we toss a biased coin with Heads probability $\alpha$. If the coin lands Heads, we pick a uniformly random element of 
$R$ and add it to $r$. If it lands Tails, we pick an element from $R$ according to an arbitrary distribution and multiply it to $r$. 

To describe the transition probabilities of this chain more formally, we will define a probability distribution on the product space
\[
\mathcal S_R = \{\, (\star,r)\;|\; \star \in \{\times,+\},\; r \in R \}
\]
as follows. The marginal distribution on $\star$ is given by 
\begin{equation}\label{marg-dist}
\Prob (\star) = \begin{cases}
\alpha & \text{if $\star = +$}, \\
1-\alpha & \text{if $\star = \times$},
\end{cases}
\end{equation}
where $\alpha \in (0,1]$ and the conditional distribution on $R$ is 
\begin{equation}\label{cond-dist}
\Prob(X = r \,|\, \star) = \begin{cases}
\ds \frac{1}{|R|} & \text{if $\star = +$}, \\
\ds \beta_r & \text{if $\star = \times$},
\end{cases}
\end{equation}
where $\beta_r \in [0,1]$ for each $r$ and $\sum_r \beta_r = 1$.
Let $(\star,r)$ be sampled from this distribution.
We then have the following Markov chain $(X_n)_{n \in \mathbb{Z}_+}$ on the state space $R$
given by 
\begin{equation}\label{Zn-evolution}
X_{n+1} = X_n \star r.
\end{equation}
We will also consider this Markov chain where multiplication is also performed in a uniformly random manner. To distinguish the two, we will denote the latter by
 $(\Xu{n})_{n \in \mathbb{Z}_+}$. That is to say,
\begin{equation}\label{Zn-evolution-uniform}
\Xu{n+1} = \Xu n \star r,
\end{equation}
where $\star$ is still chosen according to \eqref{marg-dist}, but the conditional distributions are the same, i.e., 
\[
\Prob(X = r \,|\, \star) = \Prob(X = r) =  \frac{1}{r}.
\]
In other words, the distribution here on $\mathcal S_R$ is a product distribution of $\text{Bernoulli}(\alpha)$ on $\{+,\times\}$ and the uniform distribution on $R$.

Unless explicitly specified, we will be talking about the chain $(X_n)_{n \in \mathbb{Z}_+}$.
For $a,b \in R$, we will denote the probability of making a single-step transition from $a$ to $b$ in $R$ by $\mathbb{P}(a \to b)$.
Let $M_R = (\mathbb{P}(a \to b))_{a,b \in R}$ be the transition matrix of $(X_n)_{n \in \mathbb{Z}_+}$ using some ordering of $R$. 
Thus, $M_R$ is a row-stochastic matrix, that is, a matrix of nonnegative entries whose rows sum to 1. More precisely, let $\iden_{m}$ be the column vector of size $m$ consisting of all 1's and consider the matrix $B_R = (\beta_{a,b})_{a,b \in R}$ with 
$\beta_{a,b} = \sum_{a\,x \,=\, b} \beta_x$. 
Then 
\begin{equation}\label{trans-matrix}
M_R = \frac{\alpha}{\r}\, \iden_{|R|} \iden_{|R|}^{\text{tr}}+ (1-\alpha) \, B_R .
\end{equation}
Roughly, a Markov chain is said to be {\em irreducible} if there is a positive probability to get from any state in the chain to any other state in the future. An irreducible Markov chain is said to be {\em aperiodic} if the greatest common divisor  of the set of return times to any state is 1. See \cite{LevinPeresWilmer} for the precise definitions.
Since each entry of $M_R$ is nonzero, we immediately have the following result.

\begin{prop}
\label{prop:irred}
For $\alpha \in (0,1]$, the Markov chain $(X_n)_{n \in \mathbb{Z}_+}$ is irreducible and aperiodic.
\end{prop}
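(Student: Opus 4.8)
The plan is to exploit the explicit form of the transition matrix given in \eqref{trans-matrix}. First I would observe that the rank-one term $\frac{\alpha}{\r}\,\iden_{|R|}\iden_{|R|}^{\text{tr}}$ has every entry equal to $\alpha/\r$, which is strictly positive since $\alpha \in (0,1)$ and $\r \geq 1$. The second term $(1-\alpha)\,B_R$ is a nonnegative matrix: each $\beta_x$ lies in $[0,1]$, so each $\beta_{a,b} = \sum_{ax=b}\beta_x$ is nonnegative, and $1-\alpha > 0$. Since adding a nonnegative matrix to a strictly positive one leaves every entry strictly positive, we conclude $\Prob(a \to b) \geq \alpha/\r > 0$ for all $a,b \in R$.

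Once positivity of all entries of $M_R$ is in hand, irreducibility is immediate: from any state $a$ there is positive probability $\Prob(a \to b) > 0$ of reaching any state $b$ in a single step, so all states communicate and the chain can get from any state to any other in the future. For aperiodicity, note that in particular $\Prob(a \to a) > 0$ for every $a \in R$, so $1$ belongs to the set of return times of each state; hence the greatest common divisor of that set equals $1$, which is precisely the definition of aperiodicity.

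I do not expect any real obstacle here: the entire content is contained in the formula \eqref{trans-matrix} together with the standing hypothesis $\alpha \in (0,1)$. It is worth flagging that this hypothesis is exactly what is used — if $\alpha$ were allowed to equal $0$ or $1$, the argument would break down and one would need genuine hypotheses on the $\beta_r$'s. The only care needed is to state the probabilistic conclusions in terms of their precise definitions (as in \cite{LevinPeresWilmer}) rather than the informal descriptions preceding the statement.
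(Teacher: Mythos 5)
Your proof is correct and follows essentially the same route the paper takes: the paper simply notes that every entry of $M_R$ is nonzero (because of the $\alpha/|R|$ contribution from the addition step) and concludes irreducibility and aperiodicity immediately. You have merely spelled out that observation, including the self-loop argument for aperiodicity, which is exactly what the paper leaves implicit.
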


By standard theory (see, for example, \cite[Theorem 4.9]{LevinPeresWilmer}), it follows that $(X_n)_{n \in \mathbb{Z}_+}$ has a unique stationary distribution (see Definition~\ref{def:stat-dist}) denoted by
$\pi$. 
The stationary probability of an element $x \in R$ will be denoted by $\pi(x)$. 
We will consider $\pi$ as a row-vector ordered in the same basis as for $M_R$.

We are going to be interested in the eigenvalues of $M_R$ and the following result tells us that we only need to consider the semigroup action on $R$ by multiplication. Since the $\beta_r$'s are nonnegative and sum to one, $B_R$ has the largest eigenvalue 1 by the Perron-Frobenius theorem.

\begin{prop}[{\cite[Corollary 3.1]{ding-zhou-2007}}] 
\label{prop:Bn}
Let $\lambda_1 = 1, \lambda_2, \dots, \lambda_{\r}$ be the eigenvalues of $B_R$ counted with 
multiplicity. Then the eigenvalues of $M_R$ are $ \lambda_1 = 1, (1-\alpha)\lambda_2, \dots, (1-\alpha)\lambda_{\r}$ counted with multiplicity. 
\end{prop}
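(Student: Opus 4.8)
The plan is to read off the eigenvalues directly from the algebraic form \eqref{trans-matrix}, treating $M_R$ as a rank-one ``deflation'' of the stochastic matrix $B_R$, exactly as in the analysis of the PageRank/Google matrix. First I would record two elementary facts. Summing the entries of row $a$ of $B_R$ gives $\sum_b \beta_{a,b} = \sum_b \sum_{ax=b}\beta_x = \sum_x \beta_x = 1$, so $B_R$ is row-stochastic and $B_R\, \iden_{\r} = \iden_{\r}$; moreover $w^{\text{tr}} := \frac{1}{\r}\iden_{\r}^{\text{tr}}$ satisfies $w^{\text{tr}}\iden_{\r} = 1$ and $\frac{\alpha}{\r}\iden_{\r}\iden_{\r}^{\text{tr}} = \alpha\, \iden_{\r} w^{\text{tr}}$. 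It therefore suffices to prove the following general statement: if $A$ is an $n\times n$ matrix with $A\iden_n = \iden_n$ and eigenvalues $\lambda_1=1, \lambda_2,\dots,\lambda_n$ (with multiplicity), and $w$ is a column vector with $w^{\text{tr}}\iden_n = 1$, then $N := (1-\alpha)A + \alpha\, \iden_n w^{\text{tr}}$ has eigenvalues $1, (1-\alpha)\lambda_2,\dots,(1-\alpha)\lambda_n$; taking $A = B_R$ gives the proposition.

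For the general statement, the key step is a (non-orthogonal) block triangularization using only the \emph{right} eigenvector $\iden_n$ shared by both summands. Choose any invertible $S$ whose first column is $\iden_n$, say $S = [\,\iden_n \mid Y\,]$, so that $S e_1 = \iden_n$ and hence $S^{-1}\iden_n = e_1$. The first column of $S^{-1}AS$ is $S^{-1}A\iden_n = S^{-1}\iden_n = e_1$, so
\[
S^{-1} A S = \begin{pmatrix} 1 & b^{\text{tr}} \\ 0 & C \end{pmatrix},
\]
where $C$ is $(n-1)\times(n-1)$ with eigenvalues $\lambda_2,\dots,\lambda_n$. For the rank-one term, $S^{-1}(\iden_n w^{\text{tr}})S = (S^{-1}\iden_n)(w^{\text{tr}}S) = e_1(w^{\text{tr}}S)$, and the leading entry of the row vector $w^{\text{tr}}S$ is $w^{\text{tr}}\iden_n = 1$, so this conjugate equals $\begin{pmatrix} 1 & c^{\text{tr}} \\ 0 & 0 \end{pmatrix}$. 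Adding the two contributions,
\[
S^{-1} N S = \begin{pmatrix} (1-\alpha)+\alpha & (1-\alpha)b^{\text{tr}} + \alpha c^{\text{tr}} \\ 0 & (1-\alpha)C \end{pmatrix} = \begin{pmatrix} 1 & * \\ 0 & (1-\alpha)C \end{pmatrix},
\]
so the multiset of eigenvalues of $N$ is $\{1\} \cup \{(1-\alpha)\lambda_i : 2 \le i \le n\}$, as claimed.

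It remains to address the ordering and strictness. Since $B_R$ is stochastic, every $\lambda_i$ satisfies $|\lambda_i|\le 1$ by Perron--Frobenius, so $|(1-\alpha)\lambda_i| \le 1-\alpha < 1$ for $i\ge 2$ because $\alpha\in(0,1)$; thus $1$ is not an eigenvalue of $(1-\alpha)C$, the characteristic polynomial factors as $(x-1)\det(xI-(1-\alpha)C)$ with the second factor nonvanishing at $x=1$, and $1$ is a simple eigenvalue of $M_R$ strictly dominating the rest — consistent with irreducibility and aperiodicity (Proposition~\ref{prop:irred}). When the $\lambda_i$ are real and listed in decreasing order, multiplying by $1-\alpha>0$ preserves the order, giving the displayed list. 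I do not expect a real obstacle; the only point worth flagging is that $B_R$ need not be symmetric, normal, or even diagonalizable — indeed $0\in R$ is absorbing under the multiplicative part, so $1$ may be a repeated eigenvalue of $B_R$ — and $B_R$ does not in general commute with $\iden_{\r}\iden_{\r}^{\text{tr}}$, which is exactly why one cannot argue eigenspace-by-eigenspace and why the triangularization via $S = [\iden_n \mid Y]$ is the correct device. (Since this is \cite[Corollary 3.1]{ding-zhou-2007}, one may alternatively simply cite it.)
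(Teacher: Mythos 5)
The paper does not supply a proof of this proposition: it is stated as a direct citation of \cite[Corollary 3.1]{ding-zhou-2007}, so there is no in-house argument to compare against. Your proof is a correct, self-contained derivation, and it is essentially the standard rank-one ``deflation'' argument from the PageRank/Google-matrix literature, which is very likely the route taken in the cited reference as well. The key move — conjugating by any invertible $S$ with first column $\iden_n$, so that $S^{-1}A S$ and $S^{-1}\iden_n w^{\text{tr}}S$ are simultaneously block upper triangular with the shared right eigenvector $\iden_n$ isolated in the $(1,1)$ slot — is exactly what is needed, and you correctly identify why a naive eigenspace-by-eigenspace argument fails ($B_R$ need not be normal or diagonalizable, and it does not commute with $\iden_{\r}\iden_{\r}^{\text{tr}}$). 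Two small remarks: (i) the characteristic-polynomial factorization $\det(xI-A)=(x-1)\det(xI_{n-1}-C)$ is what cleanly justifies that $C$ carries exactly the multiset $\{\lambda_2,\dots,\lambda_n\}$ even when $1$ is a repeated eigenvalue of $B_R$, so it is worth saying that sentence explicitly; (ii) the proposition as stated in the paper tacitly orders eigenvalues as real numbers, whereas for general $\beta$ the spectrum of $B_R$ can be complex (as Theorem~\ref{thm:spectrum-general} shows) — your multiset identity $\mathrm{spec}(M_R)=\{1\}\cup\{(1-\alpha)\lambda_i : i\ge 2\}$ is the robust statement, and the ordering/strictness discussion then applies after taking real parts, consistent with the convention the paper adopts in Section~\ref{sec:spectrum}.
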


In view of the above proposition, to determine eigenvalues and their multiplicities it is sufficient to consider the random walk on the semigroup $R$ under multiplication. It is well known that eigenvalues of $B_R$ are the same as that of the operator of the semigroup algebra $\mathbb C[R]$ obtained by multiplying on the left by $\sum_{x \in R} \beta_x x$ (see \cite[Section 7]{brown-2000}). The commutativity of $R$ implies that $\mathbb C[R]$ is a basic monoid algebra. Basic semigroup algebras have already been studied by Steinberg \cite{steinberg-2008,steinberg-2016}. For example, Steinberg \cite[Proposition~12.10]{steinberg-2016}  proves that eigenvalues can be determined using the fact that $\mathbb C[R]$ projects onto a commutative inverse monoid algebra. However, in this article we approach the problem differently. In particular, we explore the ring structure of $R$ which enables us to give an easy description of the eigenvalues, their multiplicities, the stationary distribution and the mixing time. 

We now write down the main results. Let $R$ be a finite commutative ring with identity. The group of invertible elements of $R$ is denoted by $U_R$.
For $a \in R$, let $I_a$ denote the principal ideal generated by $a$.
Let $\phi$ be a fixed set of generators of distinct principal ideals of $R$. 

Let $\ann(a) = \{ x \in R \mid xa= 0 \}$ be the annihilator of $a$ in $R$. 
For $a \in R$, let $Q_a = R/\ann(a)$ be the quotient ring, $U_a :=U_{Q_a} = U_R/((1 + \ann(a) \cap U_R))$ be its unit group and $f_a: R \rightarrow Q_a$ be the natural projection map. 

The set of characters of $U_R$, that is, the group of homomorphisms from $U_R$ to $\mathbb C^{\times}$, is denoted by $\widehat{U_R}$. 
For $a \in \phi$, let $\Sigma_a$  be the set of all characters of $U_R$ that are obtained by composing a character of $U_a$ with the natural projection from $U_R$ onto $U_a$,
\[
\Sigma_a = \{ \chi \in \widehat{U_R} \mid \chi((1 + \ann(a))\cap U_R) = 1 \}. 
\]
Define $F_a = f_a^{-1}(U_{a})$. The ring $R$ is finite as well as commutative and therefore it is well known that $U_R$ maps onto $U_a$ (see Proposition~\ref{prop:units-onto-units} for a proof). Therefore, for every $x \in F_a$, there exists a unit $u \in U_R$ such that $f_a(x) = f_a(u)$. Further if $u_1$ and $u_2$ are two such units then for $\chi \in \Sigma_a$, we have $\chi(u_1) = \chi(u_2)$ (see Proposition~\ref{prop:Sa-acting-set}). This is the context in which we require units associated with $x \in F_a$. Henceforth for $x \in F_a $ we fix a unit, denoted $u_a(x)$, such that $f_a(u_a(x)) = f_a(x)$. 

We are now in a position to describe the spectrum of the matrix $B_R$. From here, the spectrum of the transition matrix $M_R$ is easily obtained by using Proposition~\ref{prop:Bn}. 

\begin{thm}
\label{thm:spectrum-general} 
For every $\chi \in \Sigma_a$, we obtain an eigenvalue $\lambda_{\chi}$ of $B_R$ given by 
\[
\lambda_{\chi} = \sum_{x \in F_a} \beta_x \chi(u_a(x)). 
\]
Conversely, every eigenvalue of $B_R$ is of the form $\lambda_{\chi}$ for some $\chi \in \Sigma_a$ for some $a \in R$. 
The algebraic multiplicity, $m(\lambda_\chi)$ of $\lambda_{\chi}$ for $\chi \in \Sigma_a$, is given by 
\[
m(\lambda_\chi) = |\{ b \in \phi \mid  F_b = F_a \,\, \mathrm{and} \,\, \chi \in \Sigma_b \}|. 
\]
\end{thm}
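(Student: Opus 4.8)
The plan is to put $B_R$ into block triangular form indexed by the principal ideals of $R$, identify each diagonal block with a convolution operator on a finite abelian group, and then read off the spectrum and the multiplicities from the character theory of finite abelian groups. The main obstacle will be the final well-definedness issue in Step~3.

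\textbf{Step 1: a block triangular structure.} I would first partition $R = \bigsqcup_{a \in \phi} G_a$, where $G_a = \{ b \in R : I_b = I_a \}$ is the set of generators of the ideal $I_a$, and note that $\bar{u} \mapsto u a$ is a bijection from $U_a$ onto $G_a$. Two elementary computations drive everything: (i) if $I_c = I_a$ then $I_{cx} = I_{ax}$ for every $x$, so multiplication by $x$ maps $G_a$ into $G_{a'}$, where $a' \in \phi$ generates $I_{ax}$, an ideal contained in $I_a$ and depending only on $a$ and $x$; and (ii) $I_{ax} = I_a$ exactly when $x \in F_a$, in which case the bijection $G_a \cong U_a$ intertwines multiplication by $x$ on $G_a$ with multiplication by $f_a(x) \in U_a$. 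Working with the action of $B_R$ on row vectors (so that the span of $\{e_b : b \in I\}$ is $B_R$-invariant for every ideal $I$, since $cx \in I$ whenever $c \in I$), and ordering $\phi$ by any linear extension of the partial order ``$I_a \supseteq I_b$'', computation (i) shows $B_R$ is block triangular with respect to $\mathbb{C}^R = \bigoplus_{a \in \phi} \mathbb{C}^{G_a}$. Hence the spectrum of $B_R$ is the multiset union of the spectra of the diagonal blocks $B_R^{(a,a)}$.

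\textbf{Step 2: the diagonal blocks.} By (ii), transporting $B_R^{(a,a)}$ along $G_a \cong U_a$ turns it into convolution on the finite abelian group $U_a$ by the measure $g \mapsto \sum_{x \in F_a,\, f_a(x) = g} \beta_x$. By the character theory of finite abelian groups, the eigenvalues of this operator are $\big\{ \sum_{x \in F_a} \beta_x\, \psi(f_a(x)) : \psi \in \widehat{U_a} \big\}$, one for each character $\psi$. Since $R$ is finite, hence a product of local rings, the reduction $U_R \to U_a$ is surjective, so pullback is a bijection $\widehat{U_a} \xrightarrow{\sim} \Sigma_a$; if $\chi$ is the pullback of $\psi$, then $f_a(u_a(x)) = f_a(x)$ gives $\psi(f_a(x)) = \chi(u_a(x))$, and the eigenvalues of $B_R^{(a,a)}$ become exactly $\{ \lambda_\chi : \chi \in \Sigma_a \}$, one for each $\chi$. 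Taking the union over $a \in \phi$ shows that every $\lambda_\chi$ is an eigenvalue of $B_R$ and, conversely, that every eigenvalue arises in this way.

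\textbf{Step 3: multiplicities.} By Step~2 the eigenvalues of $B_R$, with multiplicity, are $\biguplus_{a \in \phi} \{ \lambda_\chi : \chi \in \Sigma_a \}$, so what remains is to show the number $\lambda_\chi$ attached to $\chi \in \Sigma_a$ depends only on $\chi$ and on $F_a$; equivalently, that $\chi(u_a(x)) = \chi(u_b(x))$ for all $x \in F_a = F_b$ whenever $\chi \in \Sigma_a \cap \Sigma_b$. This is the step I expect to be the crux. I would prove it via the structure theorem $R \cong \prod_i R_i$ with each $R_i$ local with maximal ideal $\mathfrak{m}_i$: then $F_a$ is determined by the set $\{ i : a_i \neq 0 \}$; for $i$ in this set one has $\ann_{R_i}(a_i) \subseteq \mathfrak{m}_i$ and the $i$-th component of $u_a(x)$ agrees with $x_i$ modulo $\ann_{R_i}(a_i)$, so $\chi_i(u_a(x)_i) = \chi_i(x_i)$ because $\chi_i$ is trivial on $1 + \ann_{R_i}(a_i)$; and for $i$ outside the set $\chi \in \Sigma_a$ forces $\chi_i$ to be trivial (on all of $U_{R_i}$). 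Putting the coordinates together yields $\chi(u_a(x)) = \prod_{i : a_i \neq 0} \chi_i(x_i)$, which manifestly depends only on $\chi$ and $F_a$. Granting this, in the multiset $\biguplus_{a \in \phi} \{ \lambda_\chi : \chi \in \Sigma_a \}$ the value labeled $\lambda_\chi$ occurs once for each $b \in \phi$ with $F_b = F_a$ and $\chi \in \Sigma_b$, giving $m(\lambda_\chi) = |\{ b \in \phi : F_b = F_a \text{ and } \chi \in \Sigma_b \}|$. (Here the algebraic multiplicity is understood in this structured sense: if distinct labels $\lambda_\chi$ happen to coincide numerically, the multiplicities add.)
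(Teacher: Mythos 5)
Your argument is correct and is, at its core, the same as the paper's: both put $B_R$ in block triangular form along the filtration of $\mathbb{C}^R$ by principal ideals, identify each diagonal block with the regular action of the finite abelian group $U_a$, and diagonalize by characters. The paper phrases this in the semigroup algebra $\mathbb{C}[R]$, pushing an eigenbasis $\{v_i\}$ of $\mathbb{C}[U_R]$ forward to bases $\{av_i\}$ of $\mathbb{C}[S_a]$; you instead transport the diagonal block to a convolution operator on $\mathbb{C}[U_a]$ and apply Fourier analysis. Same idea, different packaging. The one place you genuinely do more than the paper is Step~3: the published proof asserts that ``$\lambda_\chi$ occurs in the decomposition of $\mathbb{C}[S_b]$ if and only if $F_a = F_b$ and $\chi \in \Sigma_b$'' without actually checking that the two expressions $\sum_{x \in F_a} \beta_x \chi(u_a(x))$ and $\sum_{x \in F_b} \beta_x \chi(u_b(x))$ agree as polynomials in the $\beta_x$'s, i.e.\ that $\chi(u_a(x)) = \chi(u_b(x))$ for all $x \in F_a = F_b$ when $\chi \in \Sigma_a \cap \Sigma_b$. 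Your reduction via $R \cong \prod_i R_i$ to the identity $\chi(u_a(x)) = \prod_{i:\, a_i \neq 0} \chi_i(x_i)$ supplies exactly this well-definedness and is a welcome tightening. (Both versions, like the theorem's multiplicity statement itself, are to be read for generic $\beta_x$, since accidental numerical coincidences among distinct $\lambda_\chi$'s would change the literal algebraic multiplicity; you flag this in your closing parenthetical, and the paper says ``for generic values of $\beta_x$'' explicitly.)
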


After this work appeared, a generalization of this result was proved in~\cite{ayyer-steinberg-2017}.

The background and proof of Theorem~\ref{thm:spectrum-general} will be presented in Section~\ref{sec:spectrum}. From this, the results for principal ideal rings (see Definition~\ref{def:prin ideal ring}) in Corollary~\ref{cor:spectrum-principal} and finite chain rings (see Definition~\ref{def:finite chain ring}) in Corollary~\ref{cor:spectrum-finite-chain-rings} will follow.

We now describe probabilistic aspects of this chain.
The first result is about the stationary distribution. 
For $x,y \in R$ such that $I_x \subseteq I_y$, denote $U_{y,x}$ as the subgroup 
$((1 + \ann(x))\cap U_R)/((1 + \ann(y))\cap U_R)$ of $U_y$. 
Recall that $\beta_{a,b} = \sum_{a\,x \,=\, b} \beta_x$. 

\begin{thm}
\label{thm:stat-dist}
Let $R$ be a finite ring.
The stationary distribution $\pi(x)$ for $x \in R$ of the chain $(X_n)_{n \in \mathbb{Z}_+}$ is given by
\[
\pi(x) = \frac{\ds \frac{\alpha}{\r} + 
 (1-\alpha) \sum_{y \in \phi, I_x \subsetneq I_y} \frac{|U_y|}{|U_x|}
 \left( \sum_{u \in U_y/U_{y,x}} \beta_{f_y^{-1}(u)y,x} \right) \pi(y)}
{\ds 1- (1-\alpha) \left( \sum_{r \in F_{x}} \beta_r \right) }.
\]
\end{thm}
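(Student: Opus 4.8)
The plan is to derive the stationary equation directly from the transition matrix $M_R$ and then solve it recursively, exploiting the partial order on $R$ given by inclusion of principal ideals. Writing the stationarity condition $\pi M_R = \pi$ componentwise and using \eqref{trans-matrix}, we get for each $x \in R$
\[
\pi(x) = \frac{\alpha}{\r}\sum_{y \in R}\pi(y) + (1-\alpha)\sum_{y \in R}\pi(y)\,\beta_{y,x}
       = \frac{\alpha}{\r} + (1-\alpha)\sum_{y \in R}\pi(y)\,\beta_{y,x},
\]
since $\pi$ is a probability vector. Now observe that $\beta_{y,x} = \sum_{yz = x}\beta_z$ is nonzero only when $x \in I_y$, i.e. $I_x \subseteq I_y$; so the sum over $y \in R$ restricts to those $y$ with $I_x \subseteq I_y$. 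The first step is therefore to split this restricted sum into the diagonal-type contribution from $y$ with $I_y = I_x$ and the strictly-larger contribution from $y$ with $I_x \subsetneq I_y$.

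The second step is to handle the ``$I_y = I_x$'' part. If $I_y = I_x$ then $y = ux$ for some unit $u$, and more precisely the elements $y$ with $I_y = I_x$ are grouped by the fibres of $f_x$; summing $\beta_{y,x}$ over all such $y$ should collapse to $\sum_{r \in F_x}\beta_r$, since $yz = x$ with $I_y = I_x$ forces $z \in F_x$ (the preimage of the unit group of $Q_x$), and conversely. A key technical point here is that the stationary probability $\pi$ is constant on each set $\{y : I_y = I_x\}$ — this should follow from the symmetry of the chain under multiplication by units, or can be proved alongside the recursion. Granting this, the terms with $I_y = I_x$ contribute $(1-\alpha)\,\pi(x)\sum_{r \in F_x}\beta_r$, which we move to the left-hand side to produce the denominator $1 - (1-\alpha)\sum_{r\in F_x}\beta_r$ in the statement. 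One must check this denominator is nonzero; since $\sum_{r\in F_x}\beta_r \le 1$ and $\alpha \in (0,1)$, it is bounded below by $\alpha > 0$.

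The third step is to rewrite the strictly-larger part $\sum_{y : I_x \subsetneq I_y}\pi(y)\,\beta_{y,x}$ in the indexed form of the theorem. Here I would again use that $\pi$ is constant on $I_y$-classes to replace the sum over all such $y$ by a sum over representatives $y \in \phi$ weighted by the size of the class. The class of $y$ has size $|R|/|f_y^{-1}(U_y)|$-type count, and the combinatorial bookkeeping of how many $(y', z)$ with $I_{y'} = I_y$ satisfy $y'z = x$ is exactly what produces the group-index factor $|U_y|/|U_x|$ and the subgroup $U_{y,x}$: fixing a transversal $u \in U_y/U_{y,x}$ of that subgroup and the corresponding coset representative via $f_y^{-1}$, the coefficient becomes $\sum_{u \in U_y/U_{y,x}}\beta_{f_y^{-1}(u)y,\,x}$, matching the numerator. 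The main obstacle is precisely this third step: justifying carefully that $\pi$ is $U_R$-invariant on principal-ideal classes (which underpins both steps two and three), and then carrying out the orbit-counting that identifies the multiplicity of each $\beta$-term with $|U_y|/|U_x|$ and pins down that the relevant quotient group is $U_{y,x} = ((1+\ann(x))\cap U_R)/((1+\ann(y))\cap U_R)$. I expect the invariance to follow from the fact that the map $x \mapsto ux$ for $u \in U_R$ is an automorphism of the chain's transition structure (it commutes with the uniform additive part trivially and permutes the multiplicative part compatibly), so that $\pi \circ (u\cdot)$ is again stationary, hence equals $\pi$ by uniqueness; the counting is then a direct, if slightly delicate, application of the first isomorphism theorem to $f_y$ and $f_x$ and the containment $\ann(y) \subseteq \ann(x)$ coming from $I_x \subseteq I_y$.
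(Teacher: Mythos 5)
Your proposal follows essentially the same route as the paper's proof: write out the master equation, split the sum over $y$ by whether $I_x \subseteq I_y$, separate the $I_y = I_x$ terms (giving the denominator) from the $I_x \subsetneq I_y$ terms (giving the numerator), and use $U_R$-invariance of $\pi$ together with the coset-counting that the paper isolates as Proposition~\ref{prop:equal-stat-prob} and Lemma~\ref{lem:Uy-partition}. Your argument for the invariance via uniqueness of the stationary distribution under the automorphism $x \mapsto ux$ is exactly the mechanism the paper invokes, so this is the same proof in all essentials.
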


We also obtain the the stationary distribution of $(\Xu n)_{n \in \mathbb{Z}_+}$
as a corollary.

\begin{cor}
\label{cor:stat-dist}
Let $R$ be a finite ring.
The stationary distribution $\pi(x)$ for $x \in R$ of the chain $(\Xu{n})_{n \in \mathbb{Z}_+}$ is given by
\[
\pi(x) = \frac{\ds \alpha + 
 (1-\alpha) \sum_{y \in \phi, I_x \subsetneq I_y} |U_{y}| |\ann(y)| \pi(y)}
{\ds \r - (1-\alpha) |U_{x}| |\ann(x)| }.
\]
\end{cor}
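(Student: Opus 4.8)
The plan is to derive the formula for $(\Xu n)_{n \in \mathbb{Z}_+}$ by specializing Theorem~\ref{thm:stat-dist} to the case where the multiplication distribution is also uniform, i.e. $\beta_r = 1/\r$ for every $r \in R$. With this choice, $\beta_{a,b} = \sum_{ax=b}\beta_x = |\{x : ax = b\}|/\r = |\ann(a)|/\r$ whenever $b \in I_a$ (and $0$ otherwise), since the solution set of $ax = b$ is a coset of $\ann(a)$. So the two sums appearing in Theorem~\ref{thm:stat-dist} both become essentially counting problems.

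First I would handle the denominator: $\sum_{r \in F_x}\beta_r = |F_x|/\r$, and since $F_x = f_x^{-1}(U_x)$ is the preimage under $f_x : R \to Q_x$ of the unit group, $|F_x| = |U_x|\cdot|\ker f_x| = |U_x|\cdot|\ann(x)|$. Hence the denominator of the corollary, $1 - \frac{1-\alpha}{\r}|U_x||\ann(x)|$, matches. Next, for the numerator, fix $y \in \phi$ with $I_x \subsetneq I_y$ and examine the inner factor $\sum_{u \in U_y/U_{y,x}} \beta_{f_y^{-1}(u)y,\, x}$. In the uniform case each term equals $|\ann(f_y^{-1}(u)y)|/\r$ when $x \in I_{f_y^{-1}(u)y}$; I would argue that for $u$ ranging over the relevant coset space the element $f_y^{-1}(u)y$ runs (with appropriate multiplicity) over the generators of $I_y$, so that the sum telescopes. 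More cleanly, I expect it is easiest to bypass the coset bookkeeping entirely and observe that $\sum_{y\in\phi,\,I_x\subsetneq I_y}\frac{|U_y|}{|U_x|}\big(\sum_{u}\beta_{f_y^{-1}(u)y,x}\big)\pi(y)$ in the uniform case collapses to $\frac{1}{\r|U_x|}\sum_{y}|U_y||\ann(y)|\pi(y)$ after recognizing that $|U_y|/|U_{y,x}|$ times the per-coset annihilator size reassembles $|\ann(y)|$ up to the factor accounting for all generators of $I_y$.

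The cleanest route, which I would actually take, is to sidestep re-deriving the combinatorial identity and instead use that $(\Xu n)$ has transition matrix $M_R^{(u)} = \frac{\alpha}{\r}\iden\iden^{\mathrm{tr}} + (1-\alpha)\,\frac{1}{\r}\,C_R$ where $(C_R)_{a,b} = |\ann(a)|\,\id{b \in I_a}$, and write the stationarity equation $\pi = \pi M_R^{(u)}$ directly. Isolating the $x$-component gives $\pi(x) = \frac{\alpha}{\r} + \frac{1-\alpha}{\r}\sum_{a : x \in I_a} |\ann(a)|\,\pi(a)$. Grouping the sum over $a$ by the principal ideal $I_a$ it generates — each ideal $I_y$ with $y \in \phi$ contributing its $|U_y|$ generators (over the unit group $U_y$, which acts simply transitively on generators of $I_y$), all sharing the same $|\ann(a)| = |\ann(y)|$ and, by Lemma~\ref{thm:stat-dist}'s underlying observation, the same $\pi$-value on a generator — yields $\sum_{a : x \in I_a}|\ann(a)|\pi(a) = |U_x||\ann(x)|\pi(x) + \sum_{y \in \phi,\, I_x \subsetneq I_y}|U_y||\ann(y)|\pi(y)$, where the first term pulls out the contribution of generators of $I_x$ itself. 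Solving the resulting scalar equation for $\pi(x)$ gives exactly the stated formula.

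The main obstacle is the claim that $\pi$ is constant on the set of generators of any fixed principal ideal — i.e. $\pi(y) = \pi(y')$ whenever $I_y = I_{y'}$ — so that collecting terms by ideal is legitimate and $\phi$ (one generator per ideal) suffices. This should follow from the symmetry of the uniform chain under multiplication by units: if $u \in U_R$ then $x \mapsto ux$ is a bijection of $R$ commuting with the dynamics of $(\Xu n)$ (uniform addition is translation-by-uniform, hence unit-multiplication covariant, and uniform multiplication is manifestly so), hence it preserves the unique stationary distribution; since the generators of a given principal ideal form a single $U_R$-orbit, $\pi$ is constant on it. I would state this invariance as a short lemma (or cite the analogous step already used in the proof of Theorem~\ref{thm:stat-dist}) and then the rest is the routine algebra indicated above.
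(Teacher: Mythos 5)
Your proposal is correct, but the route you say you would actually take (your ``cleanest route'') differs from the paper's. The paper simply specializes Theorem~\ref{thm:stat-dist} at $\beta_x = 1/\r$: it evaluates $\sum_{u \in U_y/U_{y,x}} \beta_{f_y^{-1}(u)y,x}$ via Lemma~\ref{lem:Uy-partition} parts (3) and (4) (giving $|U_x||\ann(y)|/\r$, since there are $|U_x|$ cosets and each $\beta$-term equals $|\ann(y)|/\r$), and computes $|F_x| = |U_x||\ann(x)|$ for the denominator, exactly as in your preliminary sketch. You instead bypass Theorem~\ref{thm:stat-dist} entirely: you write the stationarity equation $\pi(x) = \frac{\alpha}{\r} + \frac{1-\alpha}{\r}\sum_{a\,:\,x \in I_a} |\ann(a)|\,\pi(a)$ directly (using that $|\{r : ar = x\}| = |\ann(a)|$ when $x \in I_a$), then group the sum by principal ideal. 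This grouping relies on $\pi$ being constant on generators of any fixed principal ideal; your $U_R$-equivariance argument for this is sound (and is essentially Proposition~\ref{prop:equal-stat-prob}, which you correctly flag as the step to cite). Splitting off the $I_a = I_x$ term and solving the scalar equation gives the stated formula. The tradeoff: the paper's proof is a two-line corollary once Theorem~\ref{thm:stat-dist} and Lemma~\ref{lem:Uy-partition} are in hand, while your derivation is more self-contained and sidesteps the coset bookkeeping of Lemma~\ref{lem:Uy-partition} altogether, at the cost of not reusing the general theorem.
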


The formula above can be thought of as a special case of a new formula for the stationary distribution of an arbitrary finite-state Markov chain~\cite{rhodes-schilling-2017}. 

Theorem~\ref{thm:stat-dist} and Corollary~\ref{cor:stat-dist} will be proved in Section~\ref{sec:stat-dist}. 
Computationally, Theorem~\ref{thm:stat-dist} can be used recursively by going upwards along the poset of principal ideals (see Section~\ref{sec:spectrum}). The lowest element of this poset is the set of units, and their stationary probability is given by Corollary~\ref{cor:stat-prob-units}. 
Even for $(\Xu{n})_{n \in \mathbb{Z}_+}$, the stationary probabilities seem to be complicated for general rings. However, they become simpler for local rings (see Definition~\ref{def:local ring}) and are given in Corollary~\ref{cor:stat-dist-local-ring}. They are completely described for finite chain rings in Theorem~\ref{thm:stat-dist-finite-chain}.

The mixing time for a Markov chain gives an estimate of the speed of convergence of the chain to its stationary distribution. See Section~\ref{sec:mixing-zpk} for the precise definitions. 
Let $\epsilon < 1/2$ be a fixed positive constant.

\begin{thm}
\label{thm:mixing-time}
The mixing time of the chain $(X_n)_{n \in \mathbb{Z}_+}$ for the ring $R$ is bounded above by 
\[
t_{\text{mix}}(\epsilon) \leq \frac{\log \epsilon}{\log (1-\alpha)} + 1.
\]
\end{thm}

The following example of the ring $\Z{8}$ should serve to illustrate the 
main results described here.

\begin{example}
\label{eg:Zmod8}
Let $R = \Z{8}$. We will denote elements of the ring with bars to avoid confusion and order the elements using the natural increasing order on the integers $\{\bar 0,\dots,\bar 7\}$. One can check that the multiplicative part $B_R$ of the transition matrix is given by
\begin{equation} \label{Zmod8-br}
\left(\begin{array}{cccccccc}
\beta_{0} + \beta_{1} + \beta_{2} + \beta_{3} & \multirow{2}{*}{$0$} & \multirow{2}{*}{$0$} & \multirow{2}{*}{$0$} & \multirow{2}{*}{$0$} & \multirow{2}{*}{$0$} & \multirow{2}{*}{$0$} & \multirow{2}{*}{$0$} \\
 + \beta_{4} + \beta_{5} + \beta_{6} + \beta_{7} &&&&&&& \\
\beta_{0} & \beta_{1} & \beta_{2} & \beta_{3} & \beta_{4} & \beta_{5} & \beta_{6} & \beta_{7} \\
\beta_{0} + \beta_{4} & 0 & \beta_{1} + \beta_{5} & 0 & \beta_{2} + \beta_{6} & 0 & \beta_{3} + \beta_{7} & 0 \\
\beta_{0} & \beta_{3} & \beta_{6} & \beta_{1} & \beta_{4} & \beta_{7} & \beta_{2} & \beta_{5} \\
\beta_{0} + \beta_{2} & \multirow{2}{*}{$0$} & \multirow{2}{*}{$0$} & \multirow{2}{*}{$0$} & \beta_{1} + \beta_{3}  & \multirow{2}{*}{$0$} & \multirow{2}{*}{$0$} & \multirow{2}{*}{$0$} \\
+ \beta_{4} + \beta_{6} & & & & + \beta_{5} + \beta_{7} \\
\beta_{0} & \beta_{5} & \beta_{2} & \beta_{7} & \beta_{4} & \beta_{1} & \beta_{6} & \beta_{3} \\
\beta_{0} + \beta_{4} & 0 & \beta_{3} + \beta_{7} & 0 & \beta_{2} + \beta_{6} & 0 & \beta_{1} + \beta_{5} & 0 \\
\beta_{0} & \beta_{7} & \beta_{6} & \beta_{5} & \beta_{4} & \beta_{3} & \beta_{2} & \beta_{1}
\end{array}\right),
\end{equation}
and $M_R$ by \eqref{trans-matrix}. The graph of multiplicative transitions is drawn in Figure~\ref{fig:eg-Zmod8}, where each transition of a particular value has been drawn in a distinct colour.

The eigenvalues of $B_R$ are given by Theorem~\ref{thm:spectrum-general}. Since $R$ is a finite chain ring, we can appeal directly to Corollary~\ref{cor:spectrum-finite-chain-rings}. Other than the trivial eigenvalue $1$ with multiplicity one, given by the table
\[
\begin{array}{c|c}
\text{Eigenvalue} & \text{Multiplicity} \\
\hline
\beta_1 + \beta_3 - \beta_5 -\beta_7 & 1 \\
\beta_1 - \beta_3 + \beta_5 -\beta_7 & 2 \\
\beta_1 - \beta_3 - \beta_5 +\beta_7 & 1 \\
\beta_1 + \beta_3 + \beta_5 +\beta_7 & 3
\end{array}
\]

In the special case when $\beta_i = 1/8$ for all $i$, we get eigenvalues
$1/2$ with multiplicity three and $0$ with multiplicity four. This can also be seen from Corollary~\ref{cor:spectrum-finite-chain-rings-uniform}.
This shows that the relaxation time of the Markov chain is $2$.
The stationary probabilities are given by
\begin{align*}
\pi(\bar 0) &= \frac{1}{(1+\alpha)^3}, \\
\pi(\bar4) &= \frac{\alpha}{(1+\alpha)^3}, \\
\pi(\bar2) = \pi(\bar6) &= \frac{\alpha}{2(1+\alpha)^2}, \\
\pi(\bar1) = \pi(\bar3) = \pi(\bar5) = \pi(\bar7) &= \frac{\alpha}{4(1+ \alpha)}.
\end{align*}
\end{example}

\begin{center}
\begin{figure}[htbp!]
\includegraphics[scale=0.5]{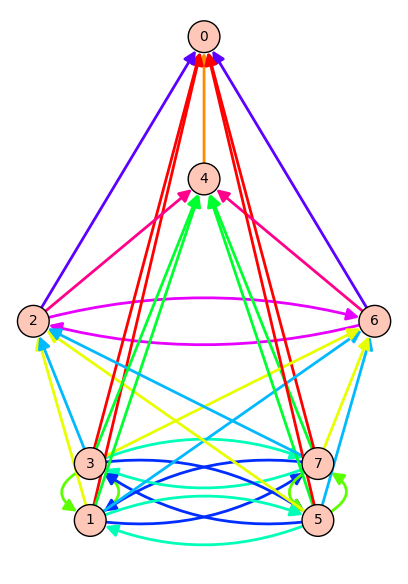}
\caption{The multiplication action of elements in $\Z{8}$. Elements are grouped according to the largest principal ideals they belong to. Transitions with different probabilities are shown in different colours; see \eqref{Zmod8-br} for the values.}
\label{fig:eg-Zmod8}
\end{figure}
\end{center}

\section{Preliminaries}
\label{sec:prelim}
In this section, we collect some basic results on finite commutative rings with identity. In some cases, we will also give short proofs. These results are present in the literature in more general settings (for Artinian rings, for example), and are well-known to specialists. However, they are perhaps easier to state and prove in our setting.
See, for example,~\cite{AtiyahMacdonald1969, bini-flamini-2002}.

\begin{defn}
\label{def:local ring}
A ring $R$ is called {\em local} if it has a unique maximal ideal $\mathrm{m}$. 
\end{defn}

\begin{thm}[{\cite[Theorem~3.1.4, Proposition~3.1.5 and  Lemma~6.4.4]{bini-flamini-2002}}]
\label{thm:structure-of-rings} 
Let $R$ be a finite commutative ring with identity. Then the following are true. 
\begin{enumerate}

\item The ring $R$ satisfies  $R \cong \prod_{i=1}^r R_i $ as rings where each $R_i$ is a finite  local ring with identity. 
\item Given the rings $R$ and $R_i's$ as in (1), the following hold. 
\begin{itemize}
\item[(a)] Every ideal $I$ of $R$ satisfies  $I \cong \prod_{i=1}^r I_i$, where each $I_i$ is an ideal of the ring $R_i.$ 
\item[(b)] The group of units of ring $R$ satisfies, $U_R \cong \prod_{i=1}^r U_{R_i},$ where  $U_{R_i}$ denote the group of units of rings $R_i$ for all $i.$
\end{itemize}
\end{enumerate}
\end{thm}

Throughout the paper, we use the symbol $\setminus$ for set difference.
The following result is standard, but we prove it for completeness.

\begin{lem}
\label{lem:invertible-in-local-ring} 
Let $\mathrm{m}$ be the unique maximal ideal of a finite commutative local ring $R$ with identity $1.$ Then every $r \in R \setminus \mathrm{m}$ is invertible. 
\end{lem}

\begin{proof}
If $r \in R \setminus \mathrm{m}$ is not invertible, then the ideal $(r)$ generated by $r$ is a proper ideal of $R$ and therefore $(r) \subseteq \mathrm{m}$.  This contradicts $r \in R \setminus \mathrm{m}$. Therefore every $r \in R \setminus \mathrm{m}$ must be invertible. 
\end{proof}

\begin{prop}
\label{prop:units-onto-units}
Let $R$ and $S$ be finite commutative rings with identity.
 \begin{enumerate}
 \item Let $f: R \rightarrow S$ be a surjective unital ring homomorphism then $f' = f|_{U_R}$ is a surjective group homomorphism from $U_R$ onto $U_S$.
 \item Let $a, b \in R$ such that $Ra = Rb$. Then there exists $u \in U_R$ such that $ua = b.$
 \end{enumerate}
\end{prop}

\begin{proof} 
Let $R \cong \prod_{i=1}^r R_i $, where each $R_i$ is a finite local ring with identity as given in Theorem~\ref{thm:structure-of-rings}. 
Then $U_R \cong \prod_{i=1}^r U_{R_i}$.
If $I = \mathrm{Ker}(f)$, then by  Theorem~\ref{thm:structure-of-rings}(2), $I \cong \prod_{i=1}^r I_i$. 
Therefore $S \cong \prod_{i=1}^r R_i/(I_i)$  and $U_S \cong \prod_{i=1}^r U_{R_i/(I_i)}$.
To prove (1), it is enough to prove that for each finite local ring $R$ and its ideal $I$, the group $U_R$ maps onto $U_{R/I}$. 
Similarly, (2) follows if the corresponding result for finite local rings is true. Hence, from now onwards, we assume that $R$ is a finite local ring. 

(1): We have that $I = \mathrm{Ker}(f)$ is a proper ideal of the local ring $R$. For any $u_1 \in U_S$, there exists $v_1 \in R$ such that $f(v_1) = u_1$. Let $u_2 \in U_S$ and $v_2 \in R$ such that $f(v_2) = u_2$, $u_2 u_1 = 1$ and  therefore $v_2 v_1 = 1 + x$ for some $x \in I$. Since $I \subseteq \mathrm{m}$, where $\mathrm{m}$ is the unique maximal ideal of $R$, $1 + x \in R \setminus \mathrm{m}$ for every $x \in I$. By Lemma~\ref{lem:invertible-in-local-ring}, $v_2 v_1 \in R \setminus I$ is invertible in $R$ and therefore $v_1$ is also an invertible element of $R$ with $f(v_1 ) = u_1$. Since $u_1 \in U_S$ was chosen arbitrarily  so it follows that $f' = f|_{U_R}: U_R \rightarrow U_S$ is a surjective group homomorphism. 

(2): The hypothesis $Ra = Rb$ implies that there exists $x, y \in R$ such that $a = xb$ and $b = ya$ and therefore $a(1 - xy) = 0$. This implies $xy \in 1 + \ann(a).$ Since $\ann(a)$ is a proper ideal of $R$ and $R$ is local, we have that $xy$ is an invertible element of $R$ by Lemma~\ref{lem:invertible-in-local-ring}. This in particular implies $x$ is an invertible element of $R.$
\end{proof} 

We note that Proposition~\ref{prop:units-onto-units}(2) holds even for finite modules over finite rings. See~\cite[Appendix~A]{ayyer-steinberg-2017} for the proof.

\begin{defn}
\label{def:prin ideal ring}
A commutative ring $R$ with identity is called a {\em principal ideal ring} if every ideal of $R$ is principal. 
We say a ring $R$ is a {\em principal ideal local ring} if it is a principal ideal ring that is also a local ring. 
\end{defn}

\begin{prop}[{\cite{clark-liang-1973,clark-drake-1973,bini-flamini-2002}}]
\label{prop:ideal-structure-PILR} 
Let $R$ be a finite principal ideal local ring with identity and with unique maximal ideal $\mathrm{m}$. Then the following hold. 
\begin{enumerate}
\item Every proper ideal of $R$ is of the form $ \mathrm{m}^k$ (the product of $k$-copies of $\mathrm{m}$) for some $k \in \mathbb N$. 
\item  There exists a smallest $k \in \mathbb N$ such that $\mathrm{m}^t = 0$ if and only if $t \geq k.$
\item There exists $\pi \in R$ such that $\mathrm{m}^k = (\pi^k)$ for every $k \in \mathrm N.$
\item Let $q$ be the cardinality of the residue field $R/\mathrm{m}$ and the $k$ be nilpotency index of $\mathrm{m}$, i.e. $k$ is such that 
$\mathrm{m}^{k-1} \neq 0$ but $\mathrm{m}^k = 0$. Then  $|R| = q^k$ and $|U_R| =(q-1)q^{k-1} .$ 
\end{enumerate}
\end{prop}

Although this result is present in the literature, we include a short proof for the reader's convenience. 

\begin{proof}
Since $R$ is a principal ideal ring, there exists $\pi \in R$ such that $\mathrm{m} = (\pi)$. Therefore it is easy to see that every element of $R$ is of the form $u \pi^t$ for some $u \in U_R$ and $t \in \mathbb N \cup \{ 0\}$. From this (1)-(3) follow easily. For (4), we note that $\mathrm{m}^i /\mathrm{m}^{i+1} \cong R/\mathrm{m}$ for all $1 \leq i \leq k-1$. Therefore the result about $|R|$ and $|U_R|$ follows. 
\end{proof}

\begin{defn}
\label{def:finite chain ring}
A finite commutative ring $R$ with identity is called a {\em finite chain ring} if the set of its ideals form a chain under inclusion.  
\end{defn}
\begin{prop}[{\cite[Theorem~17.5]{Mcdonald74} }]
\label{prop:finite-chain-iff-PILR}
A finite commutative ring is a finite chain ring if and only if it is principal ideal local ring. 
\end{prop}

\section{Eigenvalues and Multiplicities}
\label{sec:spectrum}
The eigenvalues of the transition matrix of a Markov chain give important information about the rate of convergence of the chain to its stationary distribution. 
Suppose $M$ is the transition matrix for a Markov chain $(Y_n)_{n \in \mathbb{Z}_+}$ on the finite state space $\Omega$.
The eigenvalues of $M$ will have their real parts bounded in absolute value by 1. Let us order them in weakly decreasing order of their real parts: $1 = \lambda_1 \geq \Re(\lambda_2) \geq \cdots \geq \Re(\lambda_{|\Omega|}) \geq -1 $. 

\begin{defn}
\label{def:relax-time}
The {\em spectral gap} is given by $\gamma = 1-\Re(\lambda_2)$ and the {\em absolute spectral gap}, by $\gamma_* = 1 - \ds \max(|\Re(\lambda_2)|,|\Re(\lambda_{|\Omega|})|)$. The {\em relaxation time} is given by $t_{\text{rel}} = 1/\gamma_*$.
\end{defn}

The relaxation time is a rough estimate of the time to convergence to the stationary distribution. The mixing time is a more precise estimate, which will be discussed in Section~\ref{sec:mixing-zpk}.
In this section we prove Theorem~\ref{thm:spectrum-general} that describes the eigenvalues of $M_R$ and deduce its corollaries for principal ideal rings and finite chain rings. 

\begin{center}
\begin{figure}[hbp!]
\begin{tikzpicture} [>=triangle 45]
\draw (3.0,0.0) node[circle,inner sep=4pt,draw] (u) {units};
\draw (0.0,3.0) node[circle,inner sep=4pt,draw] (2){$2, 2+t$};
\draw (3.0,3.0) node[circle,inner sep=4pt,draw] (t) {$t, 3t$};
\draw (6.0,3.0) node[circle,inner sep=4pt,draw] (23t) {$2+t, 2+3t$};
\draw (3.0,6.0) node[circle,inner sep=4pt,draw] (2t) {$2t$};
\draw (3.0,9.0) node[circle,inner sep=4pt,draw] (0) {$0$};

\draw [-] (u) -- (2) ;
\draw [-] (u) -- (t) ;
\draw [-] (u) -- (23t) ;

\draw [-] (2) -- (2t) ;
\draw [-] (t) -- (2t) ;
\draw [-] (23t) -- (2t) ;

\draw [-] (0) -- (2t) ;
\end{tikzpicture}

\caption{The Hasse diagram of $\Phi$ of $\mathbb{Z}_4[t]/ \langle t^2 \rangle$.}
\label{fig:eg-poset-prin-ideal}
\end{figure}
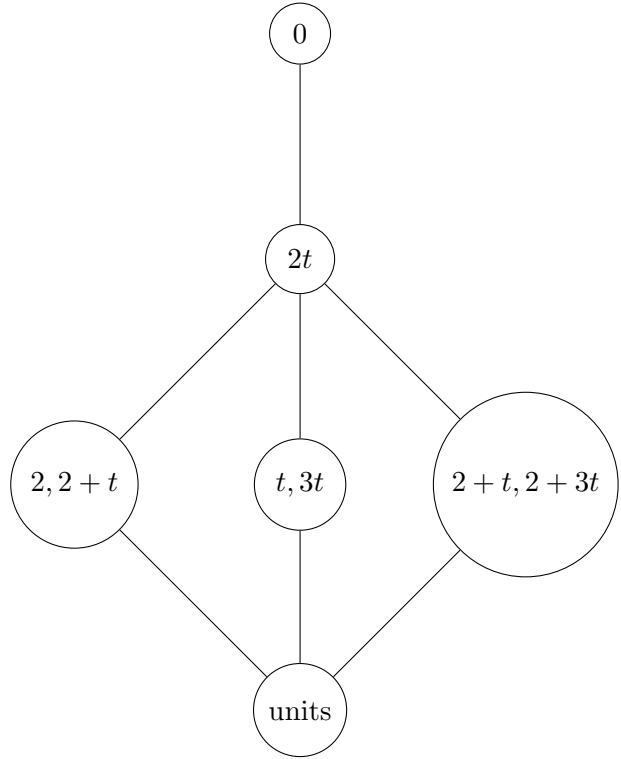
\end{center}

Let $R$ be a finite commutative ring with identity. Recall that for $a \in R$, $I_a$ denotes the principal ideal generated by $a$ and 
$\phi$ is the fixed set of generators of distinct principal ideals of $R$. Moreover, we have an equivalence relation for $a,b \in R$ whenever $I_a = I_b$.
We denote the set of equivalence classes under this relation by $\Phi$. The set $\Phi$ has a natural poset structure, where $a < b$ if $I_b \subsetneq I_a$. 
See Figure~\ref{fig:eg-poset-prin-ideal} for an illustration for the Galois ring $\mathbb{Z}_4[t]/ \langle t^2 \rangle$, which is not a principal ideal ring. In general, this poset is not a lattice, unlike the poset of ideals.
From the definitions of $I_a$ and $\phi$ it is clear that
\[
R = \bigcup_{a \in \phi} I_a.
\]
For a given ideal $I$ of $R$, we consider the set of all ideals $J$ such that $J \subsetneq I$ and the set, 
\[
S_I = I \setminus \mathop{\cup}_{J \subsetneq I } J = \{ x \in I \mid x \notin \mathop{\cup}_{J \subsetneq I } J \} . 
\]
We note that the set $S_I$ is non-empty if and only if $I$ is a principal ideal. For a principal ideal $I$ of a ring $R,$ the set $S_I$ is precisely the set of generators of $I$. 
Whenever $I = I_a$ for some $a \in R$, we write $S_I$ by $S_a$. Therefore, we obtain 
\begin{equation}
\label{R-disjoint union}
R = \mathop{\sqcup}_{a \in \phi} S_a.
\end{equation}
Recall that $U_a$ denotes the group of units of the quotient ring $Q_a = R/\ann(a)$. 
For $a =0$, the ring $Q_0 = U_0$ denotes the zero ring. Further $F_a = f_{a}^{-1}(U_a)$, where $f_a$ is the natural projection map of $R$ onto $Q_a$.  

\begin{lem}
\label{lem:fixing-set}
For any element $x \in R$, the following are equivalent. 
\begin{enumerate}
	\item $xS_a \subseteq S_a$. 
    \item $f_a(x) \in U_a$.
   	\item  $x \in F_a$
\end{enumerate}
\end{lem}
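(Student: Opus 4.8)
The plan is to prove the cycle of implications $(1) \Rightarrow (2) \Rightarrow (3) \Rightarrow (1)$, exploiting the description of $S_a$ as the set of generators of the principal ideal $I_a$ and the fact that multiplication by $x$ sends $I_a$ into itself. The equivalence $(2) \Leftrightarrow (3)$ is essentially the definition of $F_a = f_a^{-1}(U_a)$, so the only real content is relating the ``set-stabilizer'' condition $(1)$ to the unit condition $(2)$.

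First I would record the elementary observation that $y \in S_a$ if and only if $I_y = I_a$, equivalently $ya$ generates $I_a$ — wait, more precisely $y \in S_a$ iff $y$ is a generator of $I_a$, iff $I_y = I_a$. For $(2) \Rightarrow (3)$ and $(3) \Rightarrow (2)$ there is nothing to do: $x \in F_a$ means exactly $f_a(x) \in U_a$ by definition of $F_a$. For $(3) \Rightarrow (1)$: suppose $f_a(x) = \bar{u}$ is a unit in $Q_a = R/\ann(a)$, and let $y \in S_a$, so $y = ra$ for some $r$ with $I_{ra} = I_a$; equivalently there is $s \in R$ with $sra \equiv a \pmod{\ann(a)}$, hence $sra = a$ (since the congruence is modulo $\ann(a)$ and we multiply nothing further — actually $sra - a \in \ann(a)$ means $(sra-a)a = 0$, but we want $I_{ra}=I_a$, i.e. $a \in I_{ra}$). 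I would instead argue directly: $xy \cdot a = x \cdot (ya)$, and since $f_a(x)$ is a unit there is $x'$ with $xx' \equiv 1 \pmod{\ann(a)}$, so $xx'a = a$; then $a = xx'a \in I_{xa} \subseteq I_a$ shows $x \in F_a \Rightarrow I_{xa} = I_a$, and more generally $I_{xy} = I_y = I_a$ for $y \in S_a$, giving $xy \in S_a$, i.e. $xS_a \subseteq S_a$.

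For the remaining implication $(1) \Rightarrow (2)$, which I expect to be the crux: assume $xS_a \subseteq S_a$. Since $a \in S_a$ (as $a$ generates $I_a$), we get $xa \in S_a$, so $I_{xa} = I_a$, which means $a \in I_{xa}$, i.e.\ $a = r x a$ for some $r \in R$. Applying $f_a$ gives $f_a(a) = f_a(r) f_a(x) f_a(a)$ in $Q_a$. Now the key point is that $f_a(a)$ is \emph{not} a zero divisor that can be cancelled in general — instead I would observe that $a = rxa$ forces $rx \equiv 1$ on the relevant part: $(1 - rx)a = 0$, so $1 - rx \in \ann(a)$, hence $f_a(rx) = f_a(r)f_a(x) = 1$ in $Q_a$. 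Therefore $f_a(x)$ has a left (hence, by commutativity, two-sided) inverse in $Q_a$, so $f_a(x) \in U_a$, establishing $(2)$.

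The main obstacle is making sure the cancellation/inverse manipulations are done in the correct quotient: the natural temptation is to cancel $a$ from $a = rxa$, which is illegitimate in a general commutative ring, and the fix is precisely to pass to $Q_a = R/\ann(a)$, where $(1-rx)a = 0$ translates exactly into $1 - rx \in \ann(a)$ and hence into invertibility of $f_a(x)$. Once this is handled the rest is routine bookkeeping with principal ideals. I would also double-check the edge case $a = 0$: then $S_0 = \{0\}$, $\ann(0) = R$, $Q_0$ is the zero ring whose unit group is a single element, $F_0 = R$, and all three conditions hold vacuously for every $x$, so the statement is consistent.
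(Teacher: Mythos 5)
Your proof is correct and follows essentially the same route as the paper's. The paper phrases it as a short chain of equivalences, $xS_a \subseteq S_a \Leftrightarrow \exists\, y\ \text{with}\ yxa = a \Leftrightarrow yx \in 1 + \ann(a) \Leftrightarrow f_a(y)f_a(x) = 1 \Leftrightarrow x \in F_a$, together with the same edge case $a = 0$; you arrange it as a cycle $(1)\Rightarrow(2)\Rightarrow(3)\Rightarrow(1)$, but the key move in both is identical: translate $xa \in S_a$ into $a = rxa$ for some $r$, hence $1 - rx \in \ann(a)$, hence invertibility of $f_a(x)$ in $Q_a = R/\ann(a)$. (The one spot you wave at — "more generally $I_{xy} = I_y = I_a$ for $y \in S_a$," needed to pass from $xa \in S_a$ to all of $xS_a \subseteq S_a$ — deserves a sentence, e.g.\ noting that $y \in S_a$ forces $\ann(y) = \ann(a)$ so the same argument applies verbatim with $y$ in place of $a$; the paper's proof glosses over the same point.)
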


\begin{proof} 
For $a = 0$, the result is true by definition. For nonzero $a$, $xS_a \subseteq S_a$  if and only if there exists $y \in R$ such that $y xa = a$. Now $y xa = a$ if and only if  $yx \in 1 + \ann(a)$. This is equivalent to the fact $f_a(y) f_a(x) = 1$, which in turn is equivalent to $x \in F_a$. Therefore the result follows. 
\end{proof}
	    
\begin{prop}
\label{prop:Sa-acting-set} 
For any $a \in R$, the following are true.
\begin{enumerate}
\item  There exists a 1-1 correspondence between $S_a$ and $U_{a}$ given by $h_a: xa \mapsto f_a(x)$. 
\item For every $x \in F_a$, there exists $u_a(x) \in U_R$ such that 
\begin{equation}
\label{unit-existence}
x z = u_a(x) z \,\, \forall \,\, z \in S_a.
\end{equation}
\end{enumerate}
 Further $u_a(x)$ above is unique in the sense that if $y \in U_R $ satisfies (\ref{unit-existence}) then 
 \[
 \chi(y) = \chi(u_a(x)) \,\, \forall \,\,\chi \in \Sigma_a. 
 \]
\end{prop}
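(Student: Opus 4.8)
The plan is to establish the three claims in turn; part~(2) carries the real content, and the rest is formal.

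For part~(1), note that since $R$ has an identity, $I_a = Ra$, so every $z \in S_a$ has the form $z = xa$ for some $x \in R$; by definition $z \in S_a$ means $I_{xa} = I_a$, which — exactly as in the proof of Lemma~\ref{lem:fixing-set} — is equivalent to $f_a(x) \in U_a$, i.e. to $x \in F_a$. Conversely $a$ generates $I_a$, hence $a \in S_a$, so Lemma~\ref{lem:fixing-set} gives $xa \in xS_a \subseteq S_a$ for every $x \in F_a$. Thus $S_a = \{xa : x \in F_a\}$, and the assignment $h_a(xa) = f_a(x)$ is well defined and injective because $xa = x'a \iff x - x' \in \ann(a) \iff f_a(x) = f_a(x')$; it is surjective onto $U_a$ since $f_a$ is onto and $F_a = f_a^{-1}(U_a)$. (Transported by $h_a$, multiplication by $x$ on $S_a$ becomes multiplication by the unit $f_a(x)$ on $U_a$, so it is a bijection of $S_a$; this will be used later.)

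For part~(2), the key point is that the unit $f_a(x) \in U_a$ of the quotient ring $Q_a$ lifts to a genuine unit of $R$, i.e. the restriction $f_a \colon U_R \to U_a$ is onto. I would get this from the structure theorem for finite commutative rings: write $R \cong R_1 \times \cdots \times R_m$ with each $R_i$ local with maximal ideal $\mathfrak m_i$, and split $a = (a_i)_i$, $x = (x_i)_i$. Then $\ann(a) = \prod_i \ann_{R_i}(a_i)$ and $Q_a \cong \prod_i R_i / \ann_{R_i}(a_i)$; whenever $a_i \neq 0$ one has $\ann_{R_i}(a_i) \subseteq \mathfrak m_i$ (an annihilator of a nonzero element has no units), so the hypothesis $f_a(x) \in U_a$ forces $x_i \in U_{R_i}$ for each such $i$. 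Now set $u_a(x) := (v_i)_i$ with $v_i = x_i$ if $a_i \neq 0$ and $v_i = 1$ otherwise; then $u_a(x) \in U_R$, and $(u_a(x) - x)a = 0$ coordinate by coordinate, so $f_a(u_a(x)) = f_a(x)$, equivalently $u_a(x)\,a = x\,a$. Finally, for any $z \in S_a \subseteq I_a = Ra$, write $z = ra$; then, using commutativity, $u_a(x)\,z = r\,(u_a(x)\,a) = r\,(x\,a) = x\,z$, which is \eqref{unit-existence}.

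For uniqueness, if $y \in U_R$ also satisfies \eqref{unit-existence}, then taking $z = a \in S_a$ yields $y\,a = x\,a = u_a(x)\,a$, so $u_a(x) - y \in \ann(a)$ and therefore $y^{-1}u_a(x) = 1 + y^{-1}(u_a(x) - y) \in (1 + \ann(a)) \cap U_R$; since every $\chi \in \Sigma_a$ is trivial on this subgroup by the definition of $\Sigma_a$, we conclude $\chi(u_a(x)) = \chi(y)$. The only step needing real input is the unit lifting in part~(2); if one prefers to avoid the structure theorem, the same conclusion follows by choosing $N$ with $e := x^N$ idempotent and $N$ a multiple of the order of $f_a(x)$ in $U_a$ (so that $1 - e \in \ann(a)$) and taking $u_a(x) := e\,x + (1 - e)$, a unit of $R$ congruent to $x$ modulo $\ann(a)$.
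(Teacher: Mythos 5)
Your proof is correct and follows the same outline as the paper's: part (1) via Lemma~\ref{lem:fixing-set}, part (2) by lifting the unit $f_a(x) \in U_a$ to a genuine unit $u_a(x) \in U_R$ with $f_a(u_a(x)) = f_a(x)$, and uniqueness by noting $y\,u_a(x)^{-1} \in 1 + \ann(a)$. Where you differ from the paper is the treatment of the crucial lifting step: the paper simply asserts the short exact sequence $1 \to (1 + \ann(a)) \cap U_R \to U_R \to U_a \to 1$, which amounts to claiming (without argument) that $f_a|_{U_R} \colon U_R \to U_a$ is surjective. You actually prove this, and in two ways. The structure-theorem argument decomposes $R$ into local factors and lifts coordinate-wise, using that $\ann_{R_i}(a_i) \subseteq \mathfrak m_i$ whenever $a_i \neq 0$; it also cleanly absorbs the $a = 0$ case, which the paper handles separately. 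The idempotent alternative $u_a(x) = e\,x + (1-e)$ with $e = x^N$ is slicker still: it avoids the structure theorem entirely and only uses finiteness of $R$ (to get an idempotent power) and the fact that $f_a(x)$ has finite order in $U_a$. Both are correct, and either makes explicit a step the paper leaves to the reader; the idempotent version is arguably the more elementary and self-contained route.
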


\begin{proof} For $a = 0$, (1) is true by definition and for (2) we take $u_0(x) = 1$ and the rest follows easily. From now on, we assume $a \neq 0$. By Lemma~\ref{lem:fixing-set}, $xa \in S_a$ implies $f_a(x) \in U_a$. Therefore $h_a$ maps $S_a$ to $U_a$ and is injective by the definition of $f_a$.  We have the following short exact sequence of groups:
\[
1 \rightarrow (1 + \ann(a)) \cap U_R \rightarrow U_R \rightarrow U_{a} \rightarrow 1, 
\]
where surjectivity from $U_R$ onto $U_a$ follows by  Proposition~\ref{prop:units-onto-units}. 
By the above short exact sequence and by the definition of $f_a$, for any $z \in U_a$ there exists $u \in F_a \cap U_R$ such that $f_a(u) = z$ and therefore $h_a$ is surjective. For (2), as above there exists, and we fix one, $u_a(x) \in f_a^{-1}(f_a(x)) \cap U_R \subseteq F_a \cap U_R$. It is easy to see that this $u_a(x)$ satisfies (\ref{unit-existence}). For uniqueness, we note that for any $y \in U_R$ such that $ya = u_a(x)a$ implies $y(u_a(x))^{-1} \in 1+ \ann(a)$ and therefore  $\chi(y) = \chi(u_a(x))$ for all $\chi \in \Sigma_a$.
\end{proof}

\begin{rem}
\label{rem:Sa-notation} 
From Proposition~\ref{prop:Sa-acting-set}, the elements of $S_a$ can be written as $ua$ such that $u \in U_R \cap F_a$ with the property that $u a = u'a$ if and only if $f_a(u) = f_a(u') \in U_a$. 
From now on, to simplify notation, whenever there is no ambiguity, we will write elements of $S_a$ by $ua$ for $u \in U_a$.
\end{rem}

Recall for $x,y \in R$ such that $I_x \subseteq I_y$, $U_{y,x}$ is a subgroup of $U_y$ given by $ ((1 + \ann(x))\cap U_R)/((1 + \ann(y))\cap U_R)$. 

\begin{lem} 
\label{lem:Uy-partition}
For $x \in R$, $y \in \phi$, $u_i \in U_y$ and $y_i = u_i y \in S_y$, consider the sets:
\[
P_i = \{r \in R \mid r y_i = x\}.
\]
Then the following are true.
\begin{enumerate}
\item Either $P_i = P_j$ or $P_i \cap P_j = \emptyset$.
\item $P_i = P_j$ if and only if $f_y(u_i u_j^{-1}) \in U_{y,x} \subseteq U_y$. 
\item The relation $y_i \sim y_j$ holds if and only if $P_i = P_j$ partitions $S_y$ into $|U_x|$ classes of size $|U_y|/|U_x|$.
\item $|P_i| = |\ann(y)|$ for all $i$. 
\end{enumerate}
\end{lem}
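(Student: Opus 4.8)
The plan is to work throughout with the identification of $S_y$ with $U_y$ from Proposition~\ref{prop:Sa-acting-set} and Remark~\ref{rem:Sa-notation}, writing $y_i = u_i y$ with $u_i \in U_y$, and to reduce every assertion to a statement inside the quotient ring $Q_y = R/\ann(y)$ together with a counting of fibres of the projection $f_y$. The basic observation to establish first is: for $r \in R$, the equation $r y_i = x$ is equivalent to $r u_i y = x$, and since multiplication by a unit $u$ of $U_y$ on $S_y$ agrees with multiplication by a lift $u_y(u) \in U_R$ (equation~\eqref{unit-existence}), this is equivalent to $u_y(u_i) \, r \, y = x$, i.e.\ to $r \in u_y(u_i)^{-1} P$ where $P = P_{u_y^{-1}} = \{ s \in R \mid sy = x\}$ is the fibre over the "base point". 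Hence each $P_i$ is a left-translate $u_y(u_i)^{-1} P$ of a single set $P$ by a unit of $R$; in particular all $P_i$ have the same cardinality.

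With this in hand, parts (1) and (2) are the main content. Since $P_i = u_y(u_i)^{-1}P$ and $P_j = u_y(u_j)^{-1}P$, we get $P_i = P_j$ iff $u_y(u_j) u_y(u_i)^{-1} P = P$, and $P_i \cap P_j = \emptyset$ otherwise — this is because $P$, if nonempty, is either a coset of the additive subgroup $\{ s \mid sy = 0\} = \ann(y)$ (when $x \in I_y$) or empty, and two translates of such a coset by units either coincide or are disjoint. This gives (1). For (2) I would show $P_i = P_j \iff u_y(u_j)u_y(u_i)^{-1}$ acts trivially on $P$, and then identify the group of units of $R$ acting trivially on the fibre $P$ over $x$: an element $v \in U_R$ fixes $P$ iff $vs - s \in \ann(y)$ for all $s\in P$, iff (picking any $s_0 \in P$, noting $I_x = I_{s_0 y}$) $v \in 1 + \ann(x)$ modulo the kernel, which after passing to $U_y$ is exactly the subgroup $U_{y,x} = ((1+\ann(x))\cap U_R)/((1+\ann(y))\cap U_R)$. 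Thus $P_i = P_j$ iff $u_i u_j^{-1} \in U_{y,x}$, proving (2). The delicate point here — and I expect this to be the main obstacle — is checking that $U_{y,x}$ is genuinely a subgroup of $U_y$ and that the description of "units fixing the fibre" is independent of the choice of base point $s_0 \in P$ and of the chosen lifts $u_y(\cdot)$; this requires careful bookkeeping with the exact sequence $1 \to (1+\ann(y))\cap U_R \to U_R \to U_y \to 1$ and the inclusion $\ann(y) \subseteq \ann(x)$ which holds precisely because $x \in I_y$ (so $x = sy$ for some $s$, and $a y = 0 \Rightarrow ax = asy = 0$).

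Parts (3) and (4) are then counting. For (3): the relation $\sim$ on $S_y \cong U_y$ is, by (2), the relation of lying in the same coset of $U_{y,x}$, so the classes are the cosets of $U_{y,x}$ in $U_y$; there are $[U_y : U_{y,x}]$ of them, each of size $|U_{y,x}|$, and it remains to identify $[U_y : U_{y,x}] = |U_x|$. This follows from the chain of natural isomorphisms: $U_y / U_{y,x} \cong U_R / ((1+\ann(x))\cap U_R)$, and the right-hand side maps isomorphically onto $U_x = U_{R/\ann(x)}$ via $f_x$ (again using $\ann(y)\subseteq\ann(x)$ and surjectivity of $U_R \to U_x$, exactly as in the proof of Proposition~\ref{prop:Sa-acting-set}(1)). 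For (4): since some $P_i$ is a translate of $P$ by a unit and $P$ is (when nonempty) a coset of $\ann(y)$, we get $|P_i| = |\ann(y)|$; nonemptiness holds because $x \in S_x \subseteq I_x \subseteq I_y$ whenever the sets $P_i$ are under consideration — if $x \notin I_y$ then all $P_i$ are empty and (4) should be read as the empty statement, but in the intended application $I_x \subseteq I_y$ so $P$ is a genuine $\ann(y)$-coset. I would state the lemma's standing hypothesis ($x$ such that the $P_i$ are relevant, i.e.\ $I_x \subseteq I_y$) explicitly at the start of the proof to make (4) clean.
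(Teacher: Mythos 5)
Your proposal is correct, and the underlying algebra is the same as the paper's: in both cases the heart of the matter is that $P_i = P_j$ iff $u_iu_j^{-1}$ lies in $(1+\ann(x)) \cap U_R$ modulo $(1+\ann(y))\cap U_R$, which is exactly $U_{y,x}$. The paper gets there by a direct element computation (take $r \in P_i \cap P_j$ and deduce $(1 - u_iu_j^{-1})x = 0$), while you repackage it as a translate-and-stabilizer argument: $P_i = u_i^{-1}P$ with $P = \{s : sy = x\}$ a coset of $\ann(y)$, so the $P_i$ are pairwise equal or disjoint, and coincidence is governed by the setwise stabilizer of $P$, which you identify as $(1+\ann(x))\cap U_R$. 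This buys you a cleaner self-contained proof of part (1) (two translates of a coset of $\ann(y)$ by units are themselves cosets, hence equal or disjoint, independently of (2)) and makes the counting in (3) and (4) transparent; the paper's version is considerably terser and leaves the converse of (2) and all of (3), (4) as "easy to see". One small point: you flag the standing hypothesis $I_x \subseteq I_y$ as possibly missing; in fact it is implicitly carried by the paper's definition of $U_{y,x}$, which is only declared for $I_x \subseteq I_y$, so the lemma statement is not actually deficient, though making this explicit (as you do) is good practice. Otherwise the argument is sound; the only cosmetic wrinkle is that you don't need the lift $u_y(\cdot)$ at all, since by Remark~\ref{rem:Sa-notation} the representatives $u_i$ can be chosen directly in $U_R$.
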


\begin{proof} 
Let $r \in P_i \cap P_j$, which implies $ru_i y = r u_j y = x$. Then $(1 - u_i u_j^{-1})x = 0$ which is equivalent to saying $u_i u_j^{-1} \in U_{y,x}$. It is also easy to see that if $u_i u_j^{-1} \in U_{y,x}$ and $r \in P_i$ then $r \in P_j$. From this (1) and (2) follow. (3) follows from the fact that $U_{y,x}$ is a subgroup of $U_y$.
Finally, (4) follows from the definitions of $P_i$ and $\ann(y)$. 
\end{proof}

For a given set $T$, we denote $\mathbb C[T]$ as the formal vector space with basis elements parametrized by $T$. In case $T$ is a group (resp. semigroup), we extend the multiplication to $\mathbb C[T]$ and obtain a group algebra (resp. semigroup algebra). We consider $\mathbb C[R]$ as a semigroup algebra with multiplication inherited from that of $R$. As mentioned in the discussion after Proposition~\ref{prop:Bn}, the eigenvalues of $B_R$ are same as that of operator ``left multiplication by $\sum_{x \in R} \beta_x x$'' in the regular representation of the semigroup algebra $\mathbb C[R]$. 
We will use this equivalence to prove Theorem~\ref{thm:spectrum-general}.

\begin{proof}[Proof of Theorem~\ref{thm:spectrum-general}] By \eqref{R-disjoint union}, we have $\mathbb C[R] = \oplus_{a \in \phi} \mathbb C[S_a].$ 
We order the $\mathbb C[S_a]$'s such that 
if $a > b$ in $\phi$, then $\mathbb C[S_a]$ occurs before $\mathbb C[S_b]$.  
Thus, in this ordering, $S_0$ is the first and $S_1$ is the last. We prove that there exists a basis of $\mathbb C[R]$, obtained from those of $\mathbb C[S_a]$, such that $B_R$ is upper triangular in this basis with the required eigenvalues as the diagonal entries.

By Proposition~\ref{prop:Sa-acting-set} for $x \in R$ and $at \in S_a$ we have,
\begin{equation}
\label{ring-action}
x(at) = \begin{cases} u_a(x)at \in S_a & \text{if} \,\, x \in F_a \\  xat \in I_b \subsetneq I_a & \text{if}\,\, x \notin F_a. \end{cases}
\end{equation}
By definition, the set $S_1$ coincides with the group of units of $R$ and therefore there exists a basis say $\mathcal{B}_1 = \{ v_1, v_2, \ldots, v_{|U_R|} \}$ of the group algebra $\mathbb C[S_1]$ such that $v_i$ are eigenvectors under the regular action of $S_1$. This implies that for each $1 \leq i \leq |U_R|$, there exists $\chi_i \in \widehat{S_1}$ such that
\[
u v_i = \chi_i(u) v_i, \,\, \forall \,\,u \in U_R.
\]
We choose a maximal linearly independent subset of $\{ av_1, av_2, \ldots av_{|U_R|} \}$ as a subset of $\mathbb C[R]$. We denote this by $\mathcal{B}_a$. By Proposition~\ref{prop:units-onto-units}, this set is our required basis of the vector space $\mathbb C[S_a]$ for each $a \in \phi$. For any $ u \in S_1$ and $a v_i \in \mathcal B_a$, we have 
\begin{equation}
\label{unit-action}
u a v_i = a u v_i =  \chi_i(u) a v_i.
\end{equation}
Note that for any $(1 + \alpha) \in (1 + \ann(a)) \cap U_R$ we have
\[
av_i = (1 + \alpha) a v_i = \chi_i (1+ \alpha) a v_i
\]
implying that by considering the action of $U_R$ on $\mathbb C[S_a]$ given by (\ref{unit-action}), we obtain only those characters of $U_R$ that belong to $\Sigma_a$. Thus, combining equations (\ref{ring-action}) and (\ref{unit-action}) and the above discussion we obtain that
\[
\sum_{x \in R} \beta_x x(a v_i) = \sum_{x \in F_a} \beta_x \chi_i(u_a(x)) av_i + C
 \] 
where $\chi \in \Sigma_a$, $C \in \mathbb{C}[I_a \setminus S_a]$ and therefore the former belongs to $ \sum_{b > a} \mathbb C[S_b]$. The disjoin union of $\mathcal B_a$ for $a \in \phi$ gives a basis of $\mathbb C[R]$ and from above $B_R$ is upper triangular in this basis with all eigenvalues of the form $\sum_{x \in F_a} \beta_x \chi(u_a(x))$ for some $a \in \phi$ and $\chi \in \Sigma_a$. 

Further we observe that by Proposition~\ref{prop:Sa-acting-set}, the set $S_a = \{xa \mid x \in U_R  \}$ is in bijection with $U_{a}$. Thus the action of $U_R$ on $S_a$ can in fact be viewed as the inflation of the regular action of $U_{a}$ on itself. This implies that every character $\chi \in \Sigma_a$ occurs in the decomposition of $\mathbb C[S_a]$ as a $U_R$-space and that too exactly once. Therefore for $\chi \in \Sigma_a$ and for generic values\footnote{Here generic means that $\{\beta_x\}_{x \in R}$ are chosen off the finite set of hyperplanes where $\lambda_\chi = \lambda_{\chi'}$ for $\chi \neq \chi'$.} 
of $\beta_x$ the algebraic multiplicity of $\lambda_\chi$ is equal to the cardinality of $b \in \phi$ such that $\lambda_\chi$ occurs in the decomposition of $\mathbb C [S_b]$. From the above proof, it follows that $\lambda_\chi$ occurs in the decomposition of $\mathbb C[S_b]$ if and only if $F_a = F_b$ and $\chi \in \Sigma_b$. This justifies the statement about the algebraic multiplicity.  
\end{proof}	

\begin{rem} 
\label{rem:rings-without-identity}
Consider the Markov chain $(X_n)_{n \in \mathbb{Z}_+}$ on a finite commutative ring $R$ without identity. 
Proposition~\ref{prop:Bn} is still valid and so is the fact that eigenvalues of $B_R$ are same as those of operator of $\mathbb C[R]$ described as left multiplication by $\sum_{x \in R} \beta_x$.  Let $m$ be the characteristic of $R$. Consider $\widetilde{R} = R \times \Z m $ as a set with coordinate-wise addition and multiplication given by
\[
(x, a)(y, b) = (bx + ay, ab)
\]
Then $\widetilde{R}$ is a finite commutative ring with identity called the Dorroh extension of $R$ (see \cite{Dorroh-1932}). The ring $R$ embeds into $\widetilde{R}$ as an ideal.  We consider the given probability distribution $\{ \beta_x \}_{x \in R}$ as a probability distribution on $\widetilde{R}$ with its support on $R$.  By restricting this action of $\sum_{x \in R} \beta_x$ on the ideal $\mathbb C[R]$, we can extract the eigenvalues and multiplicities for transition matrix $B_R$ and therefore that of $M_R$. 
\end{rem}

\begin{cor} 
The sum $\sum_{x \in R} \beta_x = 1$ is an eigenvalue of $B_R$ and it occurs with multiplicity one. 
\end{cor}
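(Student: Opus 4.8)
The plan is to read this off Theorem~\ref{thm:spectrum-general} and its multiplicity formula. \textbf{Existence.} I would take $a=0$. Since $\ann(0)=R$, the quotient $Q_0 = R/\ann(0)$ is the zero ring, so $U_0$ is trivial, $F_0 = f_0^{-1}(U_0) = R$, and $\Sigma_0 = \{\iden\}$ (the trivial character is the only one killing $(1+\ann(0))\cap U_R = U_R$). With the choice $u_0(x)=1$ fixed in Proposition~\ref{prop:Sa-acting-set}, Theorem~\ref{thm:spectrum-general} then gives
\be
\lambda_{\iden} = \sum_{x\in F_0}\beta_x\,\iden(u_0(x)) = \sum_{x\in R}\beta_x = 1 .
\ee
(Alternatively, and more cheaply: $B_R$ is row-stochastic, since $\sum_b \beta_{a,b} = \sum_b\sum_{ax=b}\beta_x = \sum_x\beta_x = 1$, so $\iden_{\r}$ is a right eigenvector with eigenvalue $1$.)

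\textbf{Multiplicity.} The substantive point is to show that the value $1$ is produced by no pair other than $(a,\chi)=(0,\iden)$, and then to feed this into the multiplicity formula. Since $\beta_x\ge 0$, $\sum_x\beta_x=1$, and $|\chi(u_a(x))|=1$, the triangle inequality gives $|\lambda_\chi|\le \sum_{x\in F_a}\beta_x \le 1$; equality forces the support of $\beta$ to lie in $F_a$ and $\chi$ to be trivial on it. Assuming $\beta$ has full support (see below), this already forces $F_a = R$; but $F_a = f_a^{-1}(U_a) = R$ means $Q_a = U_a$, i.e.\ $Q_a$ is the zero ring, i.e.\ $\ann(a)=R$, which forces $a=0$ and hence $\chi=\iden$. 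So the eigenvalue $1$ equals $\lambda_\chi$ only for $(a,\chi)=(0,\iden)$. The multiplicity formula of Theorem~\ref{thm:spectrum-general} now reads $m(\lambda_{\iden}) = |\{\,b\in\phi : F_b = F_0 \text{ and } \iden\in\Sigma_b\,\}|$, and since (by the same argument) $F_b = R$ forces $b=0$ while $\iden\in\Sigma_0$, this set is $\{0\}$, so $m(\lambda_{\iden}) = 1$.

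\textbf{Main obstacle.} The only delicate step is excluding other $\lambda_\chi$ numerically equal to $1$; this genuinely uses the equality case of the triangle inequality together with the structural fact that $F_a = R$ if and only if $a=0$. Without full support the conclusion can fail (taking $\beta_1=1$ makes $B_R$ the identity matrix), so the corollary should be understood with the multiplication distribution fully supported — the same genericity under which the multiplicity formula in Theorem~\ref{thm:spectrum-general} is stated. Beyond this, the proof is a direct substitution into Theorem~\ref{thm:spectrum-general} and Proposition~\ref{prop:Sa-acting-set}.
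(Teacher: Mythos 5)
Your proof takes the same route as the paper (apply Theorem~\ref{thm:spectrum-general} with $a=0$, $\chi$ trivial, then use the multiplicity formula together with the observation that $F_b=R$ forces $b=0$). The paper's own argument is essentially your two lines: ``$\Sigma_0$ consists of only the trivial character'' for existence, and ``$F_a=R$ if and only if $a=0$'' for multiplicity one. What you add, and what the paper silently skips, is the check that the \emph{numerical} eigenvalue $1$ is not also produced by some other pair $(a,\chi)$ with $a\neq 0$; you close this with the equality case of the triangle inequality. That check genuinely matters: the multiplicity formula in Theorem~\ref{thm:spectrum-general} counts, for a fixed pair $(a,\chi)$, the $b$'s for which $\lambda_\chi$ occurs in $\mathbb{C}[S_b]$, and it agrees with the algebraic multiplicity of the scalar $\lambda_\chi$ only when distinct pairs do not accidentally yield the same number (the paper's proof of the theorem says ``for generic values of $\beta_x$'' even though the theorem statement omits this). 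Your observation that the corollary fails without full support is correct and is the right way to read the statement; indeed one can check directly that on $\Z{4}$ with $\beta_1=\beta_3=1/2$ the matrix $B_R$ has eigenvalue $1$ with multiplicity $3$ (the pairs $(0,\iden)$, $(2,\iden)$, and $(1,\iden)$ all yield $\lambda_\chi=1$ since the support $\{1,3\}$ is contained in $F_0$, $F_2$, and $F_1$). So your argument is not merely correct but is a more careful version of the paper's, making explicit the genericity (or full-support) hypothesis that the paper leaves implicit in both Theorem~\ref{thm:spectrum-general} and this corollary.
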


\begin{proof} The set $\Sigma_0$ consists of only the trivial character of $U_R$ and therefore we obtain that sum $\sum_{x \in R} \beta_x$ is an eigenvalue of $B_R$. Further $F_a = R$ if and only if $a = 0$. This implies our multiplicity result.  
\end{proof}

\begin{cor}
\label{cor:uniform-multiplication-general-spectrum}
If $\beta_x = \frac{1}{|R|}$ for all $x \in R$, 
then the following are true.
\begin{enumerate} 
\item All eigenvalues of $B_R$ are rational.  
\item Any nonzero eigenvalue of $B_{R}$ is equal to $ \frac{|F_a|}{|R|}$ for some $a \in \phi$. 
\item The number of nonzero eigenvalues of $B_{R}$ is equal to the number of distinct principal ideals of $R$. 
\end{enumerate}
\end{cor}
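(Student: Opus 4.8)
The plan is to specialise Theorem~\ref{thm:spectrum-general} to $\beta_x = 1/\r$ and evaluate the resulting character sums. Fix $a \in \phi$ and $\chi \in \Sigma_a$. Since $(1+\ann(a)) \cap U_R$ is precisely the kernel of the natural surjection $\pi \colon U_R \to U_a$, the condition $\chi \in \Sigma_a$ means that $\chi$ factors as $\chi = \bar\chi \circ \pi$ for a unique $\bar\chi \in \widehat{U_a}$; moreover $f_a(u_a(x)) = f_a(x)$ forces $\chi(u_a(x)) = \bar\chi(f_a(x))$. Hence
\[
\lambda_\chi \;=\; \frac{1}{\r} \sum_{x \in F_a} \chi(u_a(x)) \;=\; \frac{1}{\r} \sum_{x \in F_a} \bar\chi(f_a(x)).
\]
As $x$ runs over $F_a = f_a^{-1}(U_a)$, the element $f_a(x)$ runs over $U_a$ with each value attained $|\ann(a)|$ times, because a fibre of $f_a$ is a coset of $\ann(a)$. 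Therefore $\lambda_\chi = \frac{|\ann(a)|}{\r}\sum_{u \in U_a}\bar\chi(u)$, which by orthogonality of characters equals $\frac{|\ann(a)|\,|U_a|}{\r} = \frac{|F_a|}{\r}$ when $\bar\chi$ is trivial and $0$ otherwise; since $F_a$ is nonempty, $|F_a|/\r \neq 0$. This proves (1), because every eigenvalue then lies in $\{0\} \cup \{\, |F_a|/\r : a \in \phi \,\} \subset \mathbb Q$, and it proves (2).

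For (3) I would count eigenvalues with multiplicity via the block-triangular decomposition used in the proof of Theorem~\ref{thm:spectrum-general}. With respect to the filtration of $\mathbb C[R]$ by the subspaces $\mathbb C[S_a]$ (listed so that larger ideals come first), $B_R$ is block upper triangular, and its $a$-th diagonal block is the operator on $\mathbb C[S_a] \cong \mathbb C[U_a]$ given by left multiplication by $\sum_{x \in F_a}\beta_x f_a(x)$, i.e.\ the action of $U_a$ on its own regular representation; its eigenvalues are exactly $\{\lambda_\chi : \chi \in \Sigma_a\}$, each once. So the characteristic polynomial of $B_R$ is the product over $a \in \phi$ of the characteristic polynomials of these diagonal blocks, and by the computation above each block contributes precisely one nonzero eigenvalue, namely $|F_a|/\r$ (coming from $\bar\chi$ trivial), the other $|U_a|-1$ eigenvalues of the block being $0$. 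Summing over $a \in \phi$ shows that $B_R$ has exactly $|\phi|$ nonzero eigenvalues counted with multiplicity, and $|\phi|$ is by definition the number of distinct principal ideals of $R$.

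The delicate point is (3): it is tempting to add the multiplicities $m(\lambda_\chi)$ from Theorem~\ref{thm:spectrum-general} over the distinct nonzero values $|F_a|/\r$, but that formula is tailored to generic $\beta$, and in the uniform case distinct ideals may produce numerically equal eigenvalues even when their sets $F_a$ differ, so this naive count is unreliable. Arguing block by block sidesteps the issue, since each block contributes exactly one nonzero eigenvalue regardless of any coincidences among the numbers $|F_a|/\r$. (Alternatively, one can compute $\operatorname{rank} B_R$ directly: for $\beta_x = 1/\r$ the $a$-th row of $B_R$ equals $\frac{|\ann(a)|}{\r}$ times the indicator vector of $\{b \in R : I_b \subseteq I_a\}$, and these indicator vectors, one per class of $\Phi$, are linearly independent by triangularity along the poset $\Phi$, so $\operatorname{rank} B_R = |\phi|$; but passing from the rank to the number of nonzero eigenvalues still needs that $0$ is a semisimple eigenvalue of $B_R$, which again follows from the block decomposition, so the direct count above is preferable.)
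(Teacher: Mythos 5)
Your proof is correct and follows essentially the same route as the paper. For (1) and (2) both proofs compute $\lambda_\chi$ for uniform $\beta$ by reducing the character sum over $F_a$ to a sum over $U_a$ (the paper via coset representatives $W$ of $(1+\ann(a))\cap U_R$, you via the factorisation $\chi = \bar\chi \circ f_a$ and the $|\ann(a)|$-to-$1$ fibres of $f_a$) and then invoking orthogonality of characters and $|F_a| = |U_a|\,|\ann(a)|$. For (3) the paper's justification is one sentence -- ``for each $a \in \phi$, we will have exactly one nonzero eigenvalue'' -- which is precisely the block-triangular count you carry out explicitly. Your extra paragraph flagging that the multiplicity formula $m(\lambda_\chi)$ in Theorem~\ref{thm:spectrum-general} is only asserted for generic $\beta$ and hence cannot be naively summed in the uniform case is a genuinely useful clarification of a point the paper leaves implicit; the block-by-block count you give is the right way to sidestep it, and it is what the paper means but does not spell out.
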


\begin{proof} 
It is clear that (2) implies (1). For (2), let $W \subseteq U_R$ be the set of distinct coset representatives of $(1 + \ann(a)) \cap U_R$ in $U_R$. Then for every $x \in F_a$, there exists a unique $w \in W$ such that $u_a(x)w^{-1} \in 1 + \ann(a)$. Then $\chi \in \Sigma_a$ implies $\chi(u_a(x)) = \chi(w)$. This gives that
\[
 \frac{1}{|R|}\sum_{x \in F_a} \chi(u_a(x)) = \frac{|\ann(a)|}{|R|} \sum_{w \in W} \chi(w)
 \] 
 Further $1 + \ann(a)$ is in the kernel of $\chi$, and therefore $\chi$ can be viewed as character of $U_{a}$ satisfying $\chi(w) = \chi(f_a(w))$. The fact that $W$ consists of coset representatives gives that $f_a(w_1) \neq f_a(w_2)$ for $w_1, w_2 \in W$ whenever $w_1 \neq w_2$. Thus $\sum_{w \in W} \chi(w) = \sum_{y \in U_{a}} \chi(y)$ for a character $\chi$ of $U_{a}$. By Schur's lemma, we have
 \[
  \sum_{y \in U_{a}} \chi(y) = \begin{cases} |U_{a}| & \text{if}\,\, \chi = {\mathbf 1}_{U_{a}} \\
  0 & \text{if} \,\, \chi \neq {\mathbf 1}_{U_{a} }\end{cases}
 \] 
 Now (2) follows by observing that $|F_a| = |U_{a}| |\ann(a)|$. For (3) observe that for each $a \in \phi$, we will have exactly one nonzero eigenvalue given by $|F_a|/|R|$. 
\end{proof}

\subsection{Principal Ideal Rings} 
\label{sec:PIR}
Now we specialize to the case where $R$ is a {\em principal ideal ring} (PIR) defined in Definition~\ref{def:prin ideal ring}. Due to their simpler ideal structure, Theorem~\ref{thm:spectrum-general} specializes considerably.

By Theorem~\ref{thm:structure-of-rings} and Proposition~\ref{prop:ideal-structure-PILR}, for the finite PIR $R$, there exists Principal ideal local rings $R_1, R_2, \cdots, R_r$ such that every $x \in R$ can be written as a tuple $ (x_1, x_2, \ldots, x_r) \in R$ with $x_i \in R_i$ for $1 \leq i \leq r$. In this case, we also denote the element $x$ by $\prod_{i=1}^r x_i$. Let $\mathrm{m}_i$ be the unique maximal ideal of $R_i$ with a fixed generator $\pi_i$. We set $(\mathrm{m}_i)^0 = R_i$. Let $k_i$ be the smallest positive integer such that $\mathrm{m}_i^{k_i-1} \neq 0$ and $\mathrm{m}_i^{k_i} = 0$. In view of Theorem~\ref{thm:structure-of-rings} and Proposition~\ref{prop:ideal-structure-PILR}, every ideal of $R$ is of the form 
\[
\prod_{i = 1}^r (\mathrm{m}_i)^{a_i},\,\, \text{with}\,\, 0 \leq a_i \leq k_i\,\, \text{for all}\,\, 1 \leq i \leq r,
\]
generated by $a = \prod_{i = 1}^r (\pi_i)^{a_i}$.

 Therefore, the set $\phi$ can be identified with the set of elements $\{(\pi_1^{a_1}, \pi_2^{a_2}, \ldots, \pi_r^{a_r}) \mid 0 \leq a_i \leq k_i\}.$ For any $a = \prod_{i=1}^r \pi_i^{a_i} \in \phi$, let 
$s(a) = \{ i \mid a_i \neq k_i \} \subseteq \{1,\dots,r\} $
denote the support of $a$. 
Then we denote $I_a$ by $\prod_{i \in s(a)} \mathrm{m}_i^{a_i}$.
For $T \subseteq \{1,\dots,r\}$, define $R_T$, a subset of $R$, by a set consisting of $x \in R$ such that $x_i \in U_{R_i} $ for $i\in T$. Then $R_{s(a)} = F_a$. Further, for any $x \in F_a$, the associated unit $u_a(x)$ can be easily defined by the following.  
\[
\begin{cases} 
(u_a(x))_i = 1 , & \text{for all} \,\, i \notin s(a), \\
(u_a(x))_i = x_i , & \text{for all} \,\, i \in s(a). 
\end{cases}  
\]
The following definition is important for us. 
\begin{defn}
For a commutative ring $R$ with identity and $\chi \in \widehat{U_R}$, we say that the ideal $I$ is a {\em conductor} of $\chi$, denoted $\mathrm{cond}(\chi)$, if $I$ is the largest ideal of $R$ such that 
\[
\chi((1+ I) \cap U_R) = 1.
\] 

\end{defn}

For principal ideal rings, we obtain the following result. 

\begin{cor} 
\label{cor:spectrum-principal}
Let $R$ be a PIR of the form $R \cong \prod_{i=1}^r R_i $.
For every $\chi \in \Sigma_a$, there exists an eigenvalue $\lambda_{\chi}$ of $B_R$ given by,
	\[
	\lambda_{\chi} = \sum_{x \in R_{s(a)}}{\beta_x}\chi(u_a(x)),
	\]
and	conversely every eigenvalue of $B_R$ is of the form $\lambda_{\chi}$ for some $\chi \in \Sigma_a$ for some $a \in R$. For generic values of $\beta_x$ and for character $\chi \in \Sigma_a$ such that  
\[
 \mathrm{cond}(\chi)=  \prod_{i=1}^r \mathrm{m}_i^{b_i},
 \]
the algebraic multiplicity of $\lambda_\chi$ is $\prod_{i\in s(a)} (k_i - b_i)$. 
\end{cor}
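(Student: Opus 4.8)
The plan is to obtain the whole statement from Theorem~\ref{thm:spectrum-general} by reading it through the description of a principal ideal ring set up just above. The two assertions about $\lambda_\chi$ — the formula and its converse — should come essentially for free. We already have the identification of $\phi$ with the tuples $(\pi_1^{a_1},\dots,\pi_r^{a_r})$ with $0\le a_i\le k_i$, the equality $F_a=R_{s(a)}$, and the coordinatewise description of the unit $u_a(x)$ (equal to $x_i$ on $s(a)$ and to $1$ off $s(a)$). The only preliminary point to settle is that this coordinatewise element really satisfies $u_a(x)\,z=x\,z$ for every $z\in S_a$; I would check this directly from $R=\bigsqcup_{b\in\phi}S_b$ and $R_{s(a)}=F_a$, noting that on $S_a$ the coordinates outside $s(a)$ are annihilated while those inside are acted on invertibly, so only the $s(a)$-coordinates of $x$ enter. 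Granting that, substituting $F_a=R_{s(a)}$ into Theorem~\ref{thm:spectrum-general} reproduces the displayed formula for $\lambda_\chi$ and the converse verbatim.

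For the multiplicity I would evaluate the count $m(\lambda_\chi)=|\{b\in\phi\mid F_b=F_a\text{ and }\chi\in\Sigma_b\}|$ provided by Theorem~\ref{thm:spectrum-general}, using the decomposition $R\cong\prod_i R_i$ to reduce it to a product of one-variable counts. Write $b=(\pi_1^{c_1},\dots,\pi_r^{c_r})\in\phi$ with $0\le c_i\le k_i$. The condition $F_b=F_a$ is equivalent to $R_{s(b)}=R_{s(a)}$, hence to $s(b)=s(a)$: so $c_i=k_i$ is forced for $i\notin s(a)$, while $c_i\in\{0,\dots,k_i-1\}$ for $i\in s(a)$. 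The condition $\chi\in\Sigma_b$ reads $\chi\big((1+\ann(b))\cap U_R\big)=1$, which by the defining maximality of the conductor is the same as $\ann(b)\subseteq\mathrm{cond}(\chi)$. Since in a finite chain ring $\ann_{R_i}(\pi_i^{c})=\mathrm{m}_i^{k_i-c}$, one has $\ann(b)=\prod_i\mathrm{m}_i^{k_i-c_i}$, and with $\mathrm{cond}(\chi)=\prod_i\mathrm{m}_i^{b_i}$ this containment turns into the system of inequalities $c_i\le k_i-b_i$ for all $i$.

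The rest is counting. For $i\notin s(a)$ the only admissible value $c_i=k_i$ automatically obeys $c_i\le k_i-b_i$, because $\chi\in\Sigma_a$ already forces $b_i=0$ in those coordinates (there the whole factor $R_i=\mathrm{m}_i^{0}$ lies inside $\ann(a)$, hence inside $\mathrm{cond}(\chi)$), so such coordinates contribute a factor $1$. For $i\in s(a)$ one counts the integers $c_i$ with $0\le c_i\le k_i-1$ and $c_i\le k_i-b_i$ and multiplies these counts over $i\in s(a)$; I expect this to come out to $\prod_{i\in s(a)}(k_i-b_i)$. That this combinatorial count is the algebraic multiplicity, and not just an upper bound, is precisely the genericity clause: distinct data $(s(a),\chi)$ produce distinct linear functions of the $\beta_x$'s, exactly as in the proof of Theorem~\ref{thm:spectrum-general}, so no accidental coincidences arise.

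The step I expect to cost the most care is this last count together with the conductor bookkeeping behind it: one has to keep straight the two constraints $c_i\le k_i-1$ and $c_i\le k_i-b_i$ and be precise about the normalisation of the exponents $b_i$ of $\mathrm{cond}(\chi)$. One should also record, as a preliminary, the standard fact that $\mathrm{cond}(\chi)$ is well defined — equivalently, that the ideals $I$ with $\chi((1+I)\cap U_R)=1$ are closed under sums, so that a largest one exists. Everything else is a faithful transcription of Theorem~\ref{thm:spectrum-general} through the $\prod_i R_i$ structure, together with the standard ideal theory of finite chain rings.
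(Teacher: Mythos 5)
Your overall strategy — read Theorem~\ref{thm:spectrum-general} through the product decomposition $R\cong\prod_i R_i$, substitute $F_a=R_{s(a)}$ and the coordinatewise $u_a(x)$ for the eigenvalue formula, and then reduce the multiplicity count $|\{b\in\phi:F_b=F_a,\ \chi\in\Sigma_b\}|$ to a product of coordinatewise counts — is exactly the paper's strategy, and everything you do up to the final count is correct and in fact cleaner than the paper's own write-up (for instance, you correctly observe that $\chi\in\Sigma_a$ forces $b_i=0$ for $i\notin s(a)$, whereas the proof in the text asserts $b_i=k_i$ for those coordinates, which appears to be a slip).

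The gap is in the last step, and you yourself flag it as the uncertain point. You correctly derive that for $i\in s(a)$ the admissible exponents $c_i$ are exactly those with $0\le c_i\le k_i-1$ and $c_i\le k_i-b_i$, i.e.\ $0\le c_i\le\min(k_i-1,\,k_i-b_i)$. That set has $\min(k_i-1,\,k_i-b_i)+1$ elements: this equals $k_i$ when $b_i=0$, but equals $k_i-b_i+1$ (not $k_i-b_i$) whenever $b_i\ge 1$. So the count you actually compute does not equal the stated $\prod_{i\in s(a)}(k_i-b_i)$, and the ``I expect this to come out to\dots'' step does not go through. The paper's own proof introduces the extra constraint $k_i-r_i<k_i$ (equivalently $r_i\ge 1$), which is what forces the count down to $k_i-b_i$, but that constraint is not justified — nothing in Theorem~\ref{thm:spectrum-general} forbids $c_i=0$, and $c_i=0$ (a unit in the $i$-th coordinate) certainly gives a valid $c\in\phi$ with $s(c)=s(a)$ and $\ann(c)\subseteq\mathrm{cond}(\chi)$. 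You can confirm the discrepancy against Example~\ref{eg:Zmod9}: for $\Z9$ and $\chi_3$ with $\mathrm{cond}(\chi_3)=\mathrm{m}^1$, the formula predicts multiplicity $k-e=2-1=1$, yet the eigenvalue $\sum_{x\in U_R}\beta_x\chi_3(x)$ appears in both $\mathbb{C}[S_0]$ and $\mathbb{C}[S_1]$, so its multiplicity is $2$, as the example itself records. So your intermediate work is sound, but the claimed identity at the end is not a consequence of it; to close the argument you would have to either justify excluding $c_i=0$ or amend the multiplicity formula to $\prod_{i\in s(a)}\bigl(\min(k_i-1,\,k_i-b_i)+1\bigr)$.
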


\begin{proof} 
The result about eigenvalues is given by Theorem~\ref{thm:spectrum-general}. For the algebraic multiplicity, we observe that if $\chi \in \Sigma_a$ and $\mathrm{cond}(\chi) = I_b = \prod_{i=1}^r \mathrm{m}_i^{b_i}$ then by definition of conductor $\ann(a) \subseteq I_b$. This in particular implies that $b_i < k_i$ for all $i \in s(a)$ and $b_i = k_i$ for all $i \notin s(a)$. Therefore, by Theorem~\ref{thm:spectrum-general}, the algebraic multiplicity of $\lambda_\chi$ is the same as the cardinality of $c \in \phi$ such that $\ann(c) \subseteq I_b$ and $s(c) = s(a)$. Therefore $I_c = \prod_{i \in s(a)} \mathrm{m}_i^{r_i}$ such that $ b_i \leq k_i - r_i < k_i$ for all $i \in s(a)$. This justifies the result about algebraic multiplicity.       	
\end{proof}
	
The following corollary is a direct consequence of Corollary~\ref{cor:uniform-multiplication-general-spectrum}. 

\begin{cor} 
\label{cor:pir-spec}
Let $\beta_x = 1/|R|$ for all $x \in R$. Then the distinct eigenvalues of $B_R$ are given as follows:
for each $\chi \in \widehat{U_a}$ we have the eigenvalue
\[
\lambda_{\chi} = \begin{cases} \frac{|R_{s(a)}|}{|R|} & \text{ if $\chi = {\bf 1}_{U_a} $}\\
0 & \text{ if $\chi \neq {\bf 1}_{U_a}$} \end{cases}
\]
\end{cor}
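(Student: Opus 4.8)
The plan is to specialize Theorem~\ref{thm:spectrum-general} to the uniform weights $\beta_x = 1/|R|$, which for a PIR amounts to a restatement of Corollary~\ref{cor:uniform-multiplication-general-spectrum} together with the PIR-specific identity $F_a = R_{s(a)}$ recorded just before the definition of the conductor. First I would set up the bookkeeping: since $R$ is a PIR, $\phi$ is indexed by the tuples $(\pi_1^{a_1},\dots,\pi_r^{a_r})$ with $0 \le a_i \le k_i$, and for each such $a$ the short exact sequence $1\to (1+\ann(a))\cap U_R \to U_R \to U_a \to 1$ of Proposition~\ref{prop:Sa-acting-set} identifies $\Sigma_a$ canonically with $\widehat{U_a}$. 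This is what makes the phrase ``for each $\chi \in \widehat{U_a}$'' in the statement meaningful, and it shows, via Theorem~\ref{thm:spectrum-general}, that the eigenvalues of $B_R$ are exactly the $\lambda_\chi$ for pairs $(a,\chi)$ with $a \in \phi$ and $\chi \in \widehat{U_a}$.

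Next I would compute $\lambda_\chi$ by reusing, essentially verbatim, the character sum from the proof of Corollary~\ref{cor:uniform-multiplication-general-spectrum}. By Theorem~\ref{thm:spectrum-general} (equivalently Corollary~\ref{cor:spectrum-principal}), $\lambda_\chi = \frac{1}{|R|}\sum_{x \in F_a}\chi(u_a(x))$. Choosing a set $W$ of coset representatives of $(1+\ann(a))\cap U_R$ in $U_R$, the assignment sending $x \in F_a$ to the representative of $u_a(x)$ has all fibers of size $|\ann(a)|$; moreover $f_a$ restricts to a bijection from $W$ onto $U_a$, and $\chi$ factors through $U_a$. Hence $\sum_{x\in F_a}\chi(u_a(x)) = |\ann(a)|\sum_{y\in U_a}\chi(y)$, and by Schur's lemma (character orthogonality on the finite abelian group $U_a$) this equals $|\ann(a)|\,|U_a|$ when $\chi = \mathbf 1_{U_a}$ and $0$ otherwise. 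Since $|F_a| = |U_a|\,|\ann(a)|$ (the set map $f_a$ has fibers of size $|\ann(a)|$), we obtain $\lambda_\chi = |F_a|/|R|$ in the trivial case and $0$ in every nontrivial case.

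Finally I would substitute the PIR identity $F_a = R_{s(a)}$ (namely $x \in F_a$ if and only if $x_i \in U_{R_i}$ for every $i \in s(a)$) to rewrite $|F_a|/|R|$ as $|R_{s(a)}|/|R|$, which is exactly the claimed formula; and I would add one sentence noting that these exhaust the spectrum: by Corollary~\ref{cor:uniform-multiplication-general-spectrum}(2)--(3) every nonzero eigenvalue of $B_R$ is of the form $|F_a|/|R|$ for some $a \in \phi$, arising from $\chi = \mathbf 1_{U_a}$, and all remaining eigenvalues vanish. I do not expect a genuine obstacle here — the result is a direct consequence of Corollary~\ref{cor:uniform-multiplication-general-spectrum}; the only points needing minor care are the normalization $|F_a| = |U_a|\,|\ann(a)|$ and stating the identification $\Sigma_a \cong \widehat{U_a}$ cleanly enough that the indexing of the eigenvalue list is unambiguous.
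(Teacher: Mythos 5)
Your proposal is correct and follows the same route as the paper, which simply states that this corollary is a direct consequence of Corollary~\ref{cor:uniform-multiplication-general-spectrum}; you fill in the bookkeeping (the identification $\Sigma_a \cong \widehat{U_a}$ via the short exact sequence, the character-sum computation, and the PIR identity $F_a = R_{s(a)}$) but do not depart from the paper's method.
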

Now we specialize to the principal ideal ring $R = \Z n$. 

\begin{cor} 
\label{cor: zn-eigenvalues}
Let $R = \Z{m}$  with $m = p_1^{e_1}p_2^{e_2}\cdots p_r^{e_r}$ where $p_i$'s are distinct primes ($p_1 < \dots < p_r$) and further suppose that $\beta_x = 1/m$ for all $x \in \Z{m}$. Then the following are true. 
\begin{enumerate}
\item The eigenvalues of $B_R$ are given by
\[
\begin{cases} 
\prod_{ i \in T} (1-1/p_i) & \text{ for $\varnothing \neq T \subseteq \{1,\dots,r\}$} \\
1  & \text{ for $T = \varnothing$}.
\end{cases} 
\]
\item The second largest eigenvalue of $B_R$ is $(1-1/p_r)$.
\item The algebraic multiplicity of eigenvalue $\prod_{ i \in T} (1-\frac{1}{p_i})$ is $\prod_{i \in T}{e_i}$. 

\end{enumerate}
\end{cor}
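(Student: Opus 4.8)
The plan is to specialise the general principal-ideal-ring results to $R=\Z m$ using the Chinese Remainder decomposition $\Z m\cong\prod_{i=1}^r\Z{p_i^{e_i}}$, under which each factor $\Z{p_i^{e_i}}$ is a finite chain ring with maximal ideal $\mathrm m_i=(p_i)$ and nilpotency index $k_i=e_i$, and the uniform distribution on $\Z m$ becomes the product of the uniform distributions on the factors. With this in hand, parts (1)--(3) are a matter of reading off Corollary~\ref{cor:pir-spec} and Corollary~\ref{cor:spectrum-principal}.

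For part (1), first note that since each $e_i\ge 1$ every subset $T\subseteq\{1,\dots,r\}$ arises as $s(a)$ for some $a\in\phi$ (take $a=\prod_i\pi_i^{a_i}$ with $a_i=0$ for $i\in T$ and $a_i=k_i$ otherwise). By Corollary~\ref{cor:pir-spec} the nonzero eigenvalues of $B_R$ are exactly the numbers $|R_{s(a)}|/|R|$, $a\in\phi$. Writing $T=s(a)$ and using $R_T=\{x:x_i\in U_{R_i}\text{ for }i\in T\}$, one has $|R_T|=\prod_{i\in T}(p_i^{e_i}-p_i^{e_i-1})\prod_{i\notin T}p_i^{e_i}$, so $|R_T|/|R|=\prod_{i\in T}(1-1/p_i)$; the case $T=\varnothing$ gives the eigenvalue $1$. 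Together with the eigenvalue $0$ (present exactly when $\prod_i(e_i+1)<m$, by Corollary~\ref{cor:uniform-multiplication-general-spectrum}(3)) this is the list in (1).

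For part (3), the one point to check is that the map $T\mapsto\prod_{i\in T}(1-1/p_i)$ is injective, so that the multiplicities attached to different supports are not pooled into a single value: if $\prod_{i\in T}(1-1/p_i)=\prod_{i\in T'}(1-1/p_i)$, cancel the factors indexed by $T\cap T'$ to reduce to disjoint $T,T'$, then observe that the largest prime $p_j$ among $\{p_i:i\in T\cup T'\}$ divides the reduced denominator of exactly one of the two sides (since a prime $p_j$ cannot divide $\prod_i(p_i-1)$ when all the $p_i$ in question are $\le p_j$), forcing $T=T'$. Granting this, the only character of $U_R$ giving a nonzero eigenvalue in the uniform case is the trivial one $\mathbf 1_{U_R}$, whose conductor is all of $R$; so in the notation of Corollary~\ref{cor:spectrum-principal} we have $b_i=0$ for all $i$, and the algebraic multiplicity of $\prod_{i\in T}(1-1/p_i)$ is $\prod_{i\in s(a)}(k_i-b_i)=\prod_{i\in T}e_i$. (Equivalently, by the block-triangular structure in the proof of Theorem~\ref{thm:spectrum-general} the algebraic multiplicity of a value is the number of pairs $(b,\chi)$ realising it, and here these are exactly the $b\in\phi$ with $s(b)=T$, of which there are $\prod_{i\in T}e_i$.)

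Part (2) is then immediate from (1): since $0<1-1/p_i<1$ for every $i$, the product $\prod_{i\in T}(1-1/p_i)$ over nonempty $T$ is largest when $T$ is the singleton maximising $1-1/p_i$, i.e.\ $T=\{r\}$ since $p_r$ is the largest prime, giving second largest eigenvalue $1-1/p_r$. There is no real obstacle in the argument beyond the elementary injectivity claim used in part (3); the rest is bookkeeping with the two corollaries, and an alternative to invoking Corollary~\ref{cor:spectrum-principal} is to note that under CRT $B_{\Z m}$ is the tensor product $\bigotimes_{i=1}^r B_{\Z{p_i^{e_i}}}$, compute the spectrum of each finite-chain-ring factor directly (eigenvalues $1$, $1-1/p_i$, $0$ with multiplicities $1$, $e_i$, $p_i^{e_i}-1-e_i$), and multiply.
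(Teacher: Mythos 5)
Your proof is correct and takes essentially the same route as the paper: decompose via CRT, read part (1) off Corollary~\ref{cor:pir-spec}, and get part (3) from Corollary~\ref{cor:spectrum-principal} after checking that distinct supports give distinct eigenvalues. The only differences are that you spell out the injectivity argument that the paper asserts in one line (``if $s(a)\neq s(b)$ then $|R_{s(a)}|\neq|R_{s(b)}|$'') and you flag the eigenvalue~$0$, which the statement of part (1) tacitly omits.
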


\begin{proof}  From Corollary~\ref{cor:pir-spec} and by the facts that $\Z{m} \cong \prod_{i=1}^k \Z{p_i^{e_i}}$ and $|U_{\Z {p^e}}| = (p-1)p^{e-1}$, we obtain (1) and (2). For (3), we note that if $I_a$ and $I_b$ are ideals of $\Z m$ such that $s(a) \neq s(b)$ then $|R_{s(a)}| \neq  |R_{s(b)}|$. Now result follows by Corollaries~\ref{cor:spectrum-principal} and ~\ref{cor:pir-spec}.  
\end{proof}

\begin{rem}
Consider the chain $(\Xu n)_{n \in \mathbb{Z}_+}$ on $R = \Z{m}$  with $m = p_1^{e_1}p_2^{e_2}\cdots p_r^{e_r}$ where $p_i$'s are distinct primes ($p_1 < \dots < p_r$) and where the multiplication distribution is uniform. By Corollary~\ref{cor: zn-eigenvalues}, the spectral gap of the chain is $1/p_r$ and the relaxation time is $p_r$.
\end{rem}

\subsection{Finite Chain Rings}
\label{sec:finite-chain-ring}
Recall that finite chain rings are given by Definition~\ref{def:finite chain ring} and their important properties are obtained by combining Propositions~\ref{prop:ideal-structure-PILR} and \ref{prop:finite-chain-iff-PILR}.
In this subsection, we give eigenvalues and their algebraic as well as geometric multiplicities of the transition matrix of $B_R$ for finite chain rings $R$.

\begin{cor} 
\label{cor:spectrum-finite-chain-rings}  

Let $R$ be a finite chain ring with length $k$. Let $\mathcal{E}_R$ be the set of eigenvalues of $B_R$. Then $\mathcal{E}_R \setminus \{ 1 \} $ is in one to one correspondence with $\widehat{U_R}$, with bijection from $\widehat{U_R}$ to $\mathcal{E}_R \setminus \{ 1 \} $ given by  
\[
\chi \mapsto \lambda_{\chi} = \sum_{x \in U_R} \chi(x) \beta_x. 
\]
Further, for generic values of $\beta_x$, the geometric multiplicity of $\lambda_\chi$ is one and the algebraic multiplicity of $\lambda_\chi$ is $k-e$ where $e$ is such that $\mathrm{cond}(\chi) = \mathrm{m}^e$ . 
\end{cor}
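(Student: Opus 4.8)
The plan is to unwind Theorem~\ref{thm:spectrum-general} inside the totally ordered ideal lattice of $R$. Fix a generator $\pi$ of $\mathrm{m}$. Then the distinct (principal) ideals of $R$ are exactly $R=\mathrm{m}^0\supsetneq\mathrm{m}\supsetneq\cdots\supsetneq\mathrm{m}^{k-1}\supsetneq\mathrm{m}^k=0$, so we may take $\phi=\{1,\pi,\pi^2,\dots,\pi^{k-1},0\}$ with $I_{\pi^j}=\mathrm{m}^j$. Since $\pi^k=0\ne\pi^{k-1}$, one computes $\ann(\pi^j)=\mathrm{m}^{k-j}$, hence $Q_{\pi^j}=R/\mathrm{m}^{k-j}$ and $U_{\pi^j}=U_{R/\mathrm{m}^{k-j}}$. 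The structural fact I would emphasize is that, $R$ being local, the preimage under $f_{\pi^j}$ of the units of $R/\mathrm{m}^{k-j}$ is precisely $R\setminus\mathrm{m}=U_R$; therefore $F_{\pi^j}=U_R$ for every $0\le j\le k-1$, whereas $F_0=R$. In particular all the nonzero elements of $\phi$ share the same associated set $F$.

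With this in hand the eigenvalues are read off directly. For a nonzero $a=\pi^j$ and $x\in F_a=U_R$ one may take $u_a(x)=x$, so Theorem~\ref{thm:spectrum-general} yields, for each $\chi\in\Sigma_{\pi^j}$, the eigenvalue $\lambda_\chi=\sum_{x\in U_R}\beta_x\chi(x)$, a formula independent of $j$. Because $\ann(1)=0$ we have $\Sigma_1=\widehat{U_R}$, so every character of $U_R$ occurs; the only remaining generator is $a=0$, which contributes $\sum_{x\in R}\beta_x=1$ with multiplicity one. Hence $\mathcal{E}_R\setminus\{1\}$ is precisely $\{\lambda_\chi:\chi\in\widehat{U_R}\}$. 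To see that the parametrization is a bijection I would observe that distinct characters give distinct linear forms $\beta\mapsto\sum_x\beta_x\chi(x)$ by linear independence of characters, so for generic $\beta$ the $\lambda_\chi$ are pairwise distinct and all different from $1$.

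For the multiplicities I would invoke the count in Theorem~\ref{thm:spectrum-general}: since $F_b=F_a$ for all nonzero $b\in\phi$, the algebraic multiplicity of $\lambda_\chi$ equals the number of indices $j\in\{0,\dots,k-1\}$ with $\chi\in\Sigma_{\pi^j}$; and $\chi\in\Sigma_{\pi^j}$ exactly when $\chi$ is trivial on $(1+\mathrm{m}^{k-j})\cap U_R$, i.e. when $\mathrm{m}^{k-j}\subseteq\mathrm{cond}(\chi)=\mathrm{m}^e$, so these $j$ form an initial segment of $\{0,\dots,k-1\}$ whose length, expressed through the conductor, is the claimed $k-e$. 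The geometric-multiplicity assertion is the genuinely new ingredient, and I expect it to be the main obstacle: one must show that the generalized $\lambda_\chi$-eigenspace of the operator $L={}$``left multiplication by $\sum_{x\in R}\beta_x x$'' on $\mathbb{C}[R]$ is a single Jordan block. I would argue this by tracking the error term $C$ appearing in the proof of Theorem~\ref{thm:spectrum-general}: if $v_\chi\in\mathbb{C}[S_1]$ is a character eigenvector, then $L(\pi^j v_\chi)=\lambda_\chi\,\pi^j v_\chi+C$ with $C=\sum_{x\in\mathrm{m}\setminus\{0\}}\beta_x\,x\pi^j v_\chi$, and the component of $C$ in the $\chi$-line of $\mathbb{C}[S_{\pi^{j+1}}]$ is a fixed $\mathbb{C}$-linear combination of the $\beta_x$ supported on $x\in\mathrm{m}\setminus\mathrm{m}^2$ times $\pi^{j+1}v_\chi$, and this combination is generically nonzero. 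Thus $\pi^0 v_\chi,\pi^1 v_\chi,\dots$ form one long Jordan chain descending the ideal lattice rather than several shorter ones, forcing the geometric multiplicity to be one (and reconfirming that the algebraic multiplicity is the length of that chain). Pinning down that off-diagonal coefficient and checking its generic non-vanishing is where the real work sits; the remainder is bookkeeping with the chain of ideals.
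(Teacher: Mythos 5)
Your argument follows the paper's route closely: you specialise Theorem~\ref{thm:spectrum-general} to the totally ordered ideal chain, observe that $F_{\pi^j}=U_R$ for $0\leq j\leq k-1$ so the eigenvalue is $\sum_{x\in U_R}\beta_x\chi(x)$ for every $\chi\in\Sigma_{\pi^j}$, deduce the bijection, count the $j$'s for the algebraic multiplicity, and prove geometric multiplicity one via a single Jordan chain $v,\pi v,\pi^2 v,\dots$ with a generically nonzero subdiagonal scalar. The paper's proof is the same but terser, delegating the eigenvalue and algebraic-multiplicity claims to Corollary~\ref{cor:spectrum-principal} and writing out only the Jordan-chain argument for the geometric multiplicity.

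There is, however, an off-by-one in the algebraic-multiplicity step that you assert without checking. You correctly identify that $\chi\in\Sigma_{\pi^j}$ iff $j\leq k-e$, but the resulting index set $\{0,1,\dots,k-e\}\subseteq\{0,\dots,k-1\}$ has $k-e+1$ elements when $e\geq 1$, not $k-e$; the phrase ``whose length \ldots{} is the claimed $k-e$'' papers over this. Note that this is not merely a slip in your write-up: the formula as stated is itself off by one for nontrivial characters, and the paper's own examples confirm the count $k-e+1$. In Example~\ref{eg:Zmod8} ($\Z{8}$, $k=3$) the character with $\mathrm{cond}(\chi)=\mathrm{m}^2$ has multiplicity $2$ and the two characters with $\mathrm{cond}(\chi)=\mathrm{m}^3$ have multiplicity $1$, both matching $k-e+1$ and not $k-e$; in Example~\ref{eg:Zmod9} ($\Z{9}$, $k=2$) the quadratic character $\chi_3$ has conductor $\mathrm{m}$ and multiplicity $2=k-e+1$. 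The Jordan-chain length has the matching discrepancy: if $\mathrm{cond}(\chi)=\mathrm{m}^e$ then $\chi$ is trivial on $1+\mathrm{m}^e$ and nontrivial on $1+\mathrm{m}^{e-1}$, so $\pi^{k-e}v\neq 0$ and $\pi^{k-e+1}v=0$, giving a chain of length $k-e+1$ rather than the asserted $\pi^{k-e}v=0$. A quick check against either example would have exposed this; as written, your proof reproduces the stated formula rather than verifies it.
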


\begin{proof} In this case, for every $a \in R$ such that $a \neq 0$, we have $\Sigma_a \subseteq \widehat{U_R}$ and $R_{s(a)} = U_R$, where $R_{s(a)}$ is as defined in Section~\ref{sec:PIR}. 
Then the result about the bijective correspondence between $\mathcal{E}_R \setminus {1} $ and $\widehat{U_R}$ and their algebraic multiplicity follows from Corollary~\ref{cor:spectrum-principal}. 
To prove the result about the geometric multiplicity, we follow the notations of the proof of Theorem~\ref{thm:spectrum-general}. Let $\chi$ has conductor $(\pi^e)$. This means that $\mathrm{Ker}(\chi) = 1 + \mathrm{m}^e$. Let $v \in \mathbb{C}[S_1]$ be the unique (up to scalar multiplication) vector such that 
\[
u v = \chi(u) v \,\, \forall \,\, u \in U_R.
\] 
Then by the definition of conductor, we have $\pi^{k-e} v = 0$ and $\pi^{k-e-1} v \neq  0$. Therefore we get that the space generated by $\{\pi^i v \}_{0 \leq i \leq k-e-1}$, say $W$, is the generalized $\lambda_\chi$-eigenspace of dimension $k-e$. We prove that restriction of $(B_R- \lambda_\chi I)|_W$ has index of nilpotency equal to $k-e$. This will prove that geometric multiplicity is equal to one. For this observe that $((B_R - \lambda_\chi I)|_W)^{k-e-1}(v)$ is a scalar multiple of $\pi^{k-e-1} v$ with the scalar being some combination of $\beta_x$. As $\beta_x$'s are generic, this scalar must be nonzero and therefore we have that the index of nilpotency is in fact $k-e$. This proves the result about geometric multiplicity. 
\end{proof}

\begin{cor} 
\label{cor:spectrum-finite-chain-rings-uniform}
Let $R$ be a finite chain ring with length $k$.
When $\beta_x = 1/|R|$ for all $x \in R$, we have exactly three distinct eigenvalues given by $1$, $|U_R|/|R|$, and $0$ with multiplicities one, $k$ and $|R|-(k+1)$ respectively. 
\end{cor}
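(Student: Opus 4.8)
The plan is to derive Corollary~\ref{cor:spectrum-finite-chain-rings-uniform} as a direct specialization of Corollary~\ref{cor:spectrum-finite-chain-rings} (or equivalently of Corollary~\ref{cor:uniform-multiplication-general-spectrum}) by substituting $\beta_x = 1/|R|$ and computing the resulting character sums. First I would recall that for a finite chain ring the poset $\Phi$ of principal ideals is a chain of length $k+1$, with representatives $1, \pi, \pi^2, \dots, \pi^{k-1}, 0$, so $|\phi| = k+1$. By Corollary~\ref{cor:uniform-multiplication-general-spectrum}(3), the number of nonzero eigenvalues of $B_R$ equals the number of distinct principal ideals, namely $k+1$; one of these is the eigenvalue $1$ (corresponding to $a=0$), leaving $k$ nonzero eigenvalues strictly less than $1$.

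Next I would identify the value of these $k$ nonzero eigenvalues. By Corollary~\ref{cor:uniform-multiplication-general-spectrum}(2), each nonzero eigenvalue is of the form $|F_a|/|R|$ for some $a \in \phi$. For $a = \pi^j$ with $0 \le j \le k-1$, the annihilator is $\ann(\pi^j) = \mathrm{m}^{k-j} = (\pi^{k-j})$, so $Q_a = R/\mathrm{m}^{k-j}$ is again a finite chain ring of length $k-j$, and $F_a = f_a^{-1}(U_a)$ consists of all elements whose image in $Q_a$ is a unit, i.e. all elements not in $\mathrm{m}$. Hence $|F_a| = |R| - |\mathrm{m}| = |R|(1 - 1/q) = |U_R|$, independent of $j$. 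So every one of the $k$ nonzero eigenvalues coming from the nontrivial characters collapses to the single value $|U_R|/|R|$. (Concretely, from the proof of Corollary~\ref{cor:uniform-multiplication-general-spectrum}, only the trivial character of $U_a$ contributes, and it gives $|U_a||\ann(a)|/|R| = |F_a|/|R|$.)

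Then I would assemble the multiplicities. The eigenvalue $1$ has multiplicity one (it comes only from $a = 0$, by the corollary following the proof of Theorem~\ref{thm:spectrum-general}). The eigenvalue $|U_R|/|R|$ picks up one contribution from each of the $k$ ideals $I_{\pi^j}$, $j = 0,\dots,k-1$ — this matches the algebraic multiplicity count in Corollary~\ref{cor:spectrum-finite-chain-rings}, since for the trivial character $\mathrm{cond}(\mathbf{1}) = \mathrm{m}^0 = R$ gives algebraic multiplicity $k-0 = k$ — so its multiplicity is $k$. The remaining eigenvalues are $0$; since $B_R$ is $|R| \times |R|$, the multiplicity of $0$ is $|R| - 1 - k = |R| - (k+1)$. (Here I should note that a priori Corollary~\ref{cor:spectrum-finite-chain-rings} is stated for generic $\beta_x$, but the uniform case is exactly the situation covered by Corollary~\ref{cor:uniform-multiplication-general-spectrum}, which gives the eigenvalue values unconditionally; the only subtlety is the multiplicity of $0$, which is forced by dimension count rather than by genericity.)

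The main obstacle, such as it is, is purely bookkeeping: being careful that the count of nonzero eigenvalues (with multiplicity, via the algebraic multiplicity formula) together with the dimension of $\mathbb{C}[R]$ pins down the multiplicity of $0$, and confirming that the $k$ distinct-ideal contributions to $|U_R|/|R|$ are genuinely a single eigenvalue of multiplicity $k$ rather than $k$ distinct eigenvalues that happen to coincide only numerically — but since algebraic multiplicity is what Corollary~\ref{cor:spectrum-finite-chain-rings} computes and it equals $k$ here, there is nothing further to check. So the proof is a three-line specialization once the identification $|F_{\pi^j}| = |U_R|$ is observed.
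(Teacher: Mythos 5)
Your proof is correct and takes essentially the same route as the paper: the paper also deduces the result from Corollary~\ref{cor:pir-spec} (a specialization of Corollary~\ref{cor:uniform-multiplication-general-spectrum}), observing that a finite chain ring has $k$ nonzero principal ideals and that $R_{s(a)} = U_R$ (equivalently $|F_a| = |U_R|$) for each of them, with the multiplicity of $0$ then fixed by dimension count. Your additional cross-check against the algebraic multiplicity formula in Corollary~\ref{cor:spectrum-finite-chain-rings} is sound but not needed for the argument.
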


\begin{proof} 
The result follows from Corollary~\ref{cor:pir-spec} and the observation that in case $R$ is finite chain ring, it has $k$ nonzero ideals and for any nonzero ideal $I_a$ of $R$, we have $R_{s(a)} = U_R$. 
\end{proof}

Now we discuss an example of $\Z{9}$ to make the above ideas clear. 

\begin{example}
\label{eg:Zmod9}
We write the elements of $R = \Z{9}$ by $\{ \bar{0},\dots,\bar{8} \}$, where it is understood that addition and multiplication is modulo $9$. Then $U_R = \{\bar{1}, \bar{2}, \bar{4}, \bar{5}, \bar{7}, \bar{8} \}$. Note that $U_R$ is a cyclic group of order $6$ generated by $\bar{2}$. 
Let $\zeta$ be the sixth primitive root of unity. Define $\chi_i: U_R \rightarrow \mathbb C^{\times}$ by $\chi_i(\bar{2}) = (\zeta^i)$ for $1 \leq i \leq 6$. Then $\chi_i$'s form a complete set of distinct characters of $U_R$. Here $S_0 = U_R$, $S_1 = \{ \bar{3}, \bar{6} \}$, $S_2 = \{\bar{0}\}$.  For $\mathcal{B}_0$, we consider the following vectors in $\mathbb C[S_0]$:
\begin{equation*}
\begin{split}
v_1 &= \bar{2} + \zeta^5 \bar{4}+ \zeta \bar{8}+ \bar{7}+ \zeta^5 \bar{5}+ \zeta \bar{1},  \\
v_2 &= \bar{2} + \zeta^4 \bar{4}+ \zeta^2 \bar{8}+ \bar{7}+ \zeta^4 \bar{5}+ \zeta^2 \bar{1}, \\
v_3 &= \bar{2} + \zeta^3 \bar{4}+ \bar{8}+ \zeta^3 \bar{7}+ \bar{5}+ \zeta^3 \bar{1}, \\
v_4 &= \bar{2}+ \zeta^2 \bar{4}+ \zeta^4 \bar{8}+ \bar{7}+ \zeta^2 \bar{5}+ \zeta^4 \bar{1}, \\
v_5 &= \bar{2}+ \zeta \bar{4}+ \zeta^2 \bar{8}+ \zeta^3 \bar{7}+ \zeta^4 \bar{5}+ \zeta^5 \bar{1}, \\
v_6 &= \bar{2}+ \bar{4}+ \bar{8}+ \bar{7}+ \bar{5}+ \bar{1}. \\
\end{split}
\end{equation*}
 Then it is easy to see that for $u \in U_R$, we have 
 \[
 u v_i = \chi_i(u)v_i \,\, \forall \,\,u \in U_R \,\, \mathrm{and}\,\, 1 \leq i \leq 6.
 \] 
 Since $\chi_i$'s are distinct characters, so the set $\{v_i\}_{1 \leq i \leq 6}$ clearly form an eigenbasis of $C[S_0]$ under the action of $U_R$. For $\mathcal{B}_1$, observe that $\bar{3} v_1$, $\bar{3} v_2$, $\bar{3} v_4$ and $\bar{3} v_6$ are all scalar multiples of each other and $\bar{3} v_3$, $\bar{3} v_5$ are linearly dependent.  
So it is clear that $w_1 = \bar{3} v_3$ and $w_2 = \bar{3} v_6$ form required the basis of $\mathbb C[S_1]$ and we obtain,
\[
\bar 2(w_1) = w_1\,\, ; \,\, \bar 2 (w_2) = - w_2 = \chi^3 w_2.
\] 
Thus the only characters of $U_R$ 
 obtained by its action on $\mathbb C[S_1]$ are $\chi_3$ and $\chi_6$. These are precisely the characters with conductor $(\bar{3})$. Hence the eigenvalues $\sum_{x \in U_R} \beta_x \chi_i(x)$ for $i = 3, 6$ appear  with multiplicity two and the eigenvalues $\sum_{x \in U_R} \beta_x \chi_i(x)$ for $i = 1, 2, 4, 5$ appear with multiplicity one. Finally, by the action of $B_R$ on $\mathbb C[S_2]$ we obtain the eigenvalue $\sum_{x \in R} \beta_x = 1$ and this clearly occurs with multiplicity one. 
\end{example}

\section{The stationary distribution}
\label{sec:stat-dist}
In this section, we will prove the general results for the stationary distributions of $(X_n)_{n \in \mathbb{Z}_+}$ (Theorem~\ref{thm:stat-dist}) and $(\Xu n)_{n \in \mathbb{Z}_+}$ (Corollary~\ref{cor:stat-dist}). 
We will also write down an explicit expression for the stationary probability of units in both chains in Corollary~\ref{cor:stat-prob-units} and Corollary~\ref{cor:stat-prob-units-special} respectively.
We will also deduce the formula for local rings for the chain $(\Xu n)_{n \in \mathbb{Z}_+}$ in Corollary~\ref{cor:stat-dist-local-ring}.
We will give the complete formula for finite chain rings in Section~\ref{sec:stat-dist-finite-chain}. 

We first begin with the relevant definitions. More details can be found, for example, in \cite{LevinPeresWilmer}. 
Let $(Y_n)_{n \in \mathbb{Z}_+}$ be a discrete time Markov chain on the space $\Omega$ with transition matrix $M$.

\begin{defn}
\label{def:stat-dist}
The {\em stationary distribution} of the Markov chain $(Y_n)_{n \in \mathbb{Z}_+}$ is the row-vector $\pi$ satisfying $\pi M = \pi$ whose entries sum to 1.
\end{defn}

\begin{defn}
\label{def:revers}
A Markov chain $(Y_n)_{n \in \mathbb{Z}_+}$ is said to be {\em reversible} if, for any two states $x,y \in \Omega$, its stationary distribution $\pi$ satisfies 
\[
\pi(x) \mathbb{P}(x \to y) = \pi(y) \mathbb{P}(y \to x).
\]
\end{defn}

\begin{prop}
\label{prop:equal-stat-prob}
Let $R$ be a ring and $I$ be a principal ideal in $R$. 
For $a,b \in S_I$, the stationary probabilities of the chain $(X_n)_{n \in \mathbb{Z}_+}$ satisfy $\pi(a) = \pi(b)$.
\end{prop}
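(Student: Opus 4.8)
The plan is to exploit the symmetry of the multiplicative action coming from the group of units $U_R$. Since $a, b \in S_I$, by Proposition~\ref{prop:Sa-acting-set} (and Remark~\ref{rem:Sa-notation}) there is a unit $u \in U_R$ with $b = ua$; more precisely, writing $I = I_c$ for $c \in \phi$, the set $S_c$ is in bijection with $U_c = U_{Q_c}$, and $a, b$ correspond to two elements of that group, which differ by left multiplication by a unit. The key claim is that the map $\sigma_u : R \to R$ given by $x \mapsto ux$ is an automorphism of the Markov chain $(X_n)_{n \in \mathbb Z_+}$, i.e. it preserves the transition matrix $M_R$ in the sense that $\mathbb P(x \to y) = \mathbb P(ux \to uy)$ for all $x, y$. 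Granting that, $\pi \circ \sigma_u^{-1}$ is again a stationary distribution; by uniqueness (Proposition~\ref{prop:irred} and standard theory) $\pi \circ \sigma_u^{-1} = \pi$, hence $\pi(b) = \pi(ua) = \pi(a)$.

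So the whole proof reduces to verifying that multiplication by a unit $u$ commutes with the dynamics. First I would check this for the additive part: the map $x \mapsto ux$ is a bijection of $R$, and since the additive step adds a uniformly random element $r$, we have $ux + ur' = u(x+r')$ where $r'$ ranges uniformly as $r = ur'$ does; so the additive part of $M_R$, namely $\frac{\alpha}{|R|}\iden\iden^{\mathrm{tr}}$, is manifestly $\sigma_u$-invariant (it is invariant under \emph{any} relabelling). The multiplicative part needs a small argument: $\mathbb P(x \to y \text{ via } \times) = \sum_{xz = y}\beta_z$, and we want this to equal $\sum_{(ux)z = uy}\beta_z$. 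Here one must be a little careful because $u$ is not a zero-divisor but $x$ might be: I would argue that the condition $xz = y$ is \emph{equivalent} to $(ux)z = uy$ — the forward direction is immediate by multiplying by $u$, and the reverse follows by multiplying by $u^{-1}$. Hence the two sums are literally over the same set of $z$'s, so they are equal. This gives $\beta_{x,y} = \beta_{ux, uy}$, i.e. $B_R$ is $\sigma_u$-invariant, and therefore so is $M_R$ by \eqref{trans-matrix}.

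The main (minor) obstacle is making the bijection $S_I \to U_c$ precise enough to conclude that any two elements of $S_I$ really do differ by a \emph{global} unit of $R$, not merely by a unit of the quotient $Q_c$. This is exactly the content of Proposition~\ref{prop:Sa-acting-set}(1) together with the surjectivity of $U_R \to U_c$ established there (via the short exact sequence $1 \to (1+\ann(c))\cap U_R \to U_R \to U_c \to 1$): given $a = xc$ and $b = yc$ in $S_c$ with $f_c(x), f_c(y) \in U_c$, one has $f_c(yx^{-1}) \in U_c$ (interpreting $x^{-1}$ appropriately, or rather picking the global unit lifting $f_c(y)f_c(x)^{-1}$), and that lift $u \in U_R$ satisfies $ua = b$. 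Once this is in hand, the automorphism argument above closes the proof. I would also remark that the same argument shows, more generally, that $\pi$ is constant on orbits of the $U_R$-action on $R$, which is consistent with — and a weak form of — Theorem~\ref{thm:stat-dist}.
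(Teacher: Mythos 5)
Your proposal is correct and uses essentially the same idea as the paper's proof. The paper also argues via the ``automorphism'' given by left multiplication by a unit $u \in U_R$ with $b = ua$ (whose existence comes from Proposition~\ref{prop:Sa-acting-set} / Remark~\ref{rem:Sa-notation}), and the key identity it records, $\beta_{x,a} = \beta_{ux, ua}$, is exactly your verification that $B_R$ is $\sigma_u$-invariant; the conclusion via uniqueness of the stationary distribution is left implicit in the paper but is the intended step. You simply spell out more carefully that the additive part is invariant under any relabelling and that $xz = y \Leftrightarrow (ux)z = uy$ because $u$ is a unit.
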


\begin{proof}
This follows from the existence of an automorphism $u \in U_R$ from Remark~\ref{rem:Sa-notation} which takes $a \mapsto b = u a$.
Then, for any principal ideal $J$ and any $x \in S_J$, there exists a $y  \in S_J$ (for example, $y = ux$) such that $\beta_{x,a} = \beta_{y,b}$.
\end{proof}

We now prove the formula for the stationary distribution by a recursive argument. A vast generalization of this technique, applicable to any Markov chain, has been recently proposed by Rhodes and Schilling~\cite{rhodes-schilling-2017}.

\begin{proof}[Proof of Theorem~\ref{thm:stat-dist}]
By the uniqueness of the stationary distribution (see Proposition~\ref{prop:irred}), it suffices to solve the so-called master equation,
\begin{equation}\label{master-eq}
\pi(x) = \sum_{y \in R} \mathbb{P}(y \to x) \pi(y).
\end{equation}
Every element $y$ in $R$ can make a transition to $x$ by the addition of $x-y$ with probability $\alpha/|R|$. This is the unique transition by addition. We now split the above sum on the right hand side in two parts according to whether $y$ can make a multiplicative transition to $x$ or not. Let $R_{y,x} = \{r \in R \mid y r = x\}$.
If $I_y \cap I_x \neq I_x$, then there is no such transition and  if $I_x \subseteq I_y$, there is one transition for each element in $R_{y,x}$.
This gives
\[
\pi(x) = 
\sum_{\substack{y \in R \\ I_x \subseteq I_y}} 
\left(\frac{\alpha}{\r} + (1-\alpha) \beta_{y,x} \right) \, \pi(y) 
+ \sum_{\substack{y \in R \\ I_y \cap I_x \neq I_x}} \frac{\alpha}{\r} \,\pi(y).
\]
Combining the first term from the first sum and the second sum gives
\[
\pi(x) = \frac{\alpha}{\r} +
(1-\alpha) \sum_{\substack{y \in R \\ I_x \subseteq I_y}} \beta_{y,x}  \, \pi(y).
\]
We now split the second sum according to whether $I_y$ equals $I_x$ or not. Then, using Proposition~\ref{prop:equal-stat-prob}, we obtain
\begin{equation}
\label{pi-sum}
\pi(x) = \frac{\alpha}{\r} +
(1-\alpha) \left( \pi(x) \sum_{\substack{x' \in R \\ I_x = I_{x'}}}  \beta_{x',x} +  \sum_{\substack{y \in R \\ I_x \subsetneq I_y}} \beta_{y,x}  \, \pi(y), \right).
\end{equation}
For the first sum in \eqref{pi-sum}, when $y \in S_x$, $U_{y,x}$ is trivial. Therefore, the sets $P_i$ in Lemma~\ref{lem:Uy-partition} are disjoint and form a partition of $F_x$, giving
\[
\sum_{\substack{x' \in R \\ I_x = I_{x'}}}  \beta_{x',x} = \sum_{r \in F_x} \beta_r.
\]
Let us now consider the second sum in \eqref{pi-sum}.
By Lemma~\ref{lem:Uy-partition}, parts (1) and (2), we can restrict the $y$-sum to be over $\phi$ and collect coset representatives in $U_y/U_{y,x}$ to account for all the terms. By Lemma~\ref{lem:Uy-partition}(3), the number of times each representative occurs is $|U_y|/|U_x|$. 
We then use Proposition~\ref{prop:equal-stat-prob} to obtain the identity
\[
\sum_{\substack{y \in R \\ I_x \subsetneq I_y}} \beta_{y,x} \, \pi(y)
= \sum_{y \in \phi, I_x \subsetneq I_y} \frac{|U_y|}{|U_x|} \, \pi(y)
 \sum_{u \in U_y/U_{y,x}} \beta_{f_y^{-1}(u)y,x}.
\]
Combining these elements and simplifying leads to the desired result.
\end{proof}

\begin{proof}[Proof of Corollary~\ref{cor:stat-dist}]
From Lemma~\ref{lem:Uy-partition} parts (3) and (4), when $\beta_x = 1/|R|$ for all $x \in R$, we obtain
\[
\sum_{u \in U_y/U_{y,x}} \beta_{f_y^{-1}(u)y,x} = \frac{\ann(y)| |U_x|}{|R|}.
\]
Finally, from the definition of $F_x$, it is clear that $|F_x| = |\ann(x)| |U_x|$, completing the proof.
\end{proof}

Theorem~\ref{thm:stat-dist} and Corollary~\ref{cor:stat-dist} 
can be used to calculate the stationary probability of $x \in R$ using the poset of principal ideals. The difficulty in the calculation depends on the height of $I_x$ in this poset. The easiest stationary probabilities to calculate are those of units, while the hardest is that for the zero element.

\begin{cor}
\label{cor:stat-prob-units}
The stationary probability of $x \in U_R$ in the chain $(X_n)_{n \in \mathbb{Z}_+}$ is given by
\[
\pi(x) = \frac{\alpha}{\r \left( \ds \sum_{y \notin U_R} \beta_y 
+ \alpha \sum_{y \in U_R} \beta_y \right)}.
\]
\end{cor}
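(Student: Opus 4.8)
The idea is to specialize the master equation (the stationary equation $\pi(x) = \sum_{y\in R}\mathbb P(y\to x)\pi(y)$) to the case $x\in U_R$. Since $U_R = S_1$ is the unique minimal class in the poset $\Phi$, there is no $y\in\phi$ with $I_x\subsetneq I_y$ — the only ideal strictly containing $I_x = R$ would have to be $R$ itself — so the ``upward'' sum in Theorem~\ref{thm:stat-dist} is empty. Thus Theorem~\ref{thm:stat-dist} collapses to
\[
\pi(x) = \frac{\alpha/\r}{1 - (1-\alpha)\sum_{r\in F_x}\beta_r}.
\]
The remaining work is to identify $F_x$ for a unit $x$ and to rewrite this in the stated form.

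For $x\in U_R$ we have $\ann(x) = \{0\}$, hence $Q_x = R/\ann(x) = R$, $U_x = U_R$, and $F_x = f_x^{-1}(U_x) = U_R$. Therefore $\sum_{r\in F_x}\beta_r = \sum_{y\in U_R}\beta_y$, and $\pi(x)$ does not depend on the choice of unit $x$ (consistent with Proposition~\ref{prop:equal-stat-prob}, since $U_R = S_1$). Substituting gives
\[
\pi(x) = \frac{\alpha/\r}{1 - (1-\alpha)\sum_{y\in U_R}\beta_y}.
\]
The last step is purely algebraic: writing $\sum_{y\in U_R}\beta_y = 1 - \sum_{y\notin U_R}\beta_y$, the denominator becomes
\[
1 - (1-\alpha)\Bigl(1 - \sum_{y\notin U_R}\beta_y\Bigr)
= \alpha + (1-\alpha)\sum_{y\notin U_R}\beta_y
= \alpha\sum_{y\in U_R}\beta_y + \alpha\sum_{y\notin U_R}\beta_y + (1-\alpha)\sum_{y\notin U_R}\beta_y,
\]
and collecting the $\sum_{y\notin U_R}\beta_y$ terms yields $\alpha\sum_{y\in U_R}\beta_y + \sum_{y\notin U_R}\beta_y$, which after multiplying by $\r$ is exactly the claimed denominator. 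Hence $\pi(x) = \alpha\big/\bigl(\r(\sum_{y\notin U_R}\beta_y + \alpha\sum_{y\in U_R}\beta_y)\bigr)$.

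There is essentially no obstacle here: the corollary is a direct specialization of Theorem~\ref{thm:stat-dist} to the bottom of the ideal poset, where the recursion terminates with no feedback term. The only point requiring a line of justification is that $U_R$ really is minimal in $\Phi$ — equivalently, that no principal ideal properly contains $R$ — so that the sum over $y\in\phi$ with $I_x\subsetneq I_y$ is genuinely empty; and the identification $F_x = U_R$ for $x$ a unit, which is immediate from $\ann(x)=0$. Alternatively, one could bypass Theorem~\ref{thm:stat-dist} and verify the formula directly from the master equation: a unit $x$ receives additive transitions from every $y\in R$ (total mass $(\alpha/\r)\sum_y\pi(y) = \alpha/\r$) and multiplicative transitions only from other units (since $yr=x\in U_R$ forces $y\in U_R$), contributing $(1-\alpha)\sum_{y\in U_R}\beta_{y,x}\pi(y)$; using $\pi(y)=\pi(x)$ for all $y\in U_R$ and $\sum_{y\in U_R}\beta_{y,x} = \sum_{r\in U_R}\beta_r$ (as $y\mapsto yr$ permutes $U_R$), one solves the resulting single linear equation for $\pi(x)$.
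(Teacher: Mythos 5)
Your proof is correct and follows the same route as the paper: specialize Theorem~\ref{thm:stat-dist} to $x\in U_R$, observe that the numerator sum is empty because $I_x = R$ is maximal, identify $F_x = U_R$, and simplify. The paper states only the first two observations and leaves the algebraic rearrangement $1 - (1-\alpha)\sum_{y\in U_R}\beta_y = \sum_{y\notin U_R}\beta_y + \alpha\sum_{y\in U_R}\beta_y$ implicit; you supply it, along with a valid alternative direct verification from the master equation.
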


\begin{proof}
Since $I_x = R$, the sum in the numerator of Theorem~\ref{thm:stat-dist} is empty and  $F_x = U_R$.
\end{proof}

The following corollary is then immediate.

\begin{cor}
\label{cor:stat-prob-units-special}
The stationary probability of $x \in U_R$ in the chain $(\Xu n)_{n \in \mathbb{Z}_+}$ is given by
\[
\pi(x) = \frac{\alpha}{\r- (1-\alpha)|U_R|}.
\]
\end{cor}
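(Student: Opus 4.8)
The plan is to obtain this as a one-line specialization of Corollary~\ref{cor:stat-prob-units}. Recall that the chain $(\Xu n)_{n \in \mathbb{Z}_+}$ is precisely the chain $(X_n)_{n \in \mathbb{Z}_+}$ with the particular choice $\beta_y = 1/\r$ for every $y \in R$. Since $x \in U_R$ satisfies $I_x = R$, the hypothesis of Corollary~\ref{cor:stat-prob-units} is met, so that corollary applies verbatim with this choice of the $\beta_y$.

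First I would evaluate the two sums in the denominator of Corollary~\ref{cor:stat-prob-units} under the uniform choice. Splitting $R$ into $U_R$ and its complement gives $\sum_{y \notin U_R} \beta_y = (\r - |U_R|)/\r$ and $\sum_{y \in U_R} \beta_y = |U_R|/\r$. Substituting these into $\pi(x) = \alpha / \bigl( \r\,( \sum_{y \notin U_R} \beta_y + \alpha \sum_{y \in U_R} \beta_y ) \bigr)$, the leading factor $\r$ cancels the common denominator of the two sums, leaving $\pi(x) = \alpha / \bigl( \r - |U_R| + \alpha |U_R| \bigr) = \alpha / \bigl( \r - |U_R|(1-\alpha) \bigr)$, as claimed.

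There is essentially no obstacle: the statement is a routine arithmetic simplification, and the only point worth noting is that $I_x = R$ (so that Corollary~\ref{cor:stat-prob-units} is applicable) holds automatically because $x$ is a unit. Alternatively, the same identity follows directly from Corollary~\ref{cor:stat-dist} by observing that for $x \in U_R$ the sum in the numerator over $y \in \phi$ with $I_x \subsetneq I_y$ is empty, while $\ann(x) = \{0\}$ and $U_x = U_R$, so the formula reduces to $\pi(x) = (\alpha/\r) / \bigl( 1 - (1-\alpha)|U_R|/\r \bigr)$, which is the same expression after clearing $\r$ from numerator and denominator.
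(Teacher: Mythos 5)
Your proof is correct and is exactly what the paper intends: the paper dispatches this corollary with the single remark that it is ``immediate'' from Corollary~\ref{cor:stat-prob-units}, and your substitution of $\beta_y = 1/|R|$ followed by the arithmetic simplification is precisely that immediate step.
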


For local rings, Corollary~\ref{cor:stat-dist} simplifies to the following.
\begin{cor}
\label{cor:stat-dist-local-ring}
Let $R$ be a finite local ring.
Then the stationary probability $\pi(x)$ for $x \in R$ in the chain $(\Xu n)_{n \in \mathbb{Z}_+}$is given by
\[
\pi(x) = \frac{\alpha + (1-\alpha)|U_R| \ds \sum_{y \in \phi, I_x \subsetneq I_y} \pi(y)} {\ds \r - (1-\alpha)|U_R|}.
\]
\end{cor}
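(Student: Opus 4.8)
The plan is to obtain Corollary~\ref{cor:stat-dist-local-ring} as a direct specialization of Corollary~\ref{cor:stat-dist}, the only input being one cardinality identity peculiar to a finite local ring $R$: for every nonzero $y\in R$ one has $|U_y|\,|\ann(y)|=|U_R|$ (equivalently $|F_y|=|U_R|$, using $|F_y|=|\ann(y)|\,|U_y|$ as noted in the proof of Corollary~\ref{cor:stat-dist}). Granting this, observe that every $y$ occurring in the sum in Corollary~\ref{cor:stat-dist} satisfies $I_x\subsetneq I_y$, hence $I_y$ properly contains $I_x\supseteq\{0\}$ and so $y\neq 0$; therefore each factor $|U_y|\,|\ann(y)|$ equals $|U_R|$ and may be pulled out of the sum, and likewise $|U_x|\,|\ann(x)|=|U_R|$ in the denominator. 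Substituting turns the formula of Corollary~\ref{cor:stat-dist} into precisely the asserted expression.

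So the real work is the identity $|U_y|\,|\ann(y)|=|U_R|$ for $0\neq y\in R$. Let $\mathrm{m}$ be the maximal ideal of $R$. Since $y\neq 0$, the ideal $\ann(y)$ is proper, hence $\ann(y)\subseteq\mathrm{m}$. In a local ring $1+\mathrm{m}\subseteq U_R$ (if $1+a\in\mathrm{m}$ with $a\in\mathrm{m}$ then $1=(1+a)-a\in\mathrm{m}$, contradicting that $\mathrm{m}$ is proper), so $1+\ann(y)\subseteq U_R$ and $(1+\ann(y))\cap U_R=1+\ann(y)$ has exactly $|\ann(y)|$ elements. Plugging this into the short exact sequence
\[
1\longrightarrow (1+\ann(y))\cap U_R\longrightarrow U_R\longrightarrow U_y\longrightarrow 1
\]
recorded in the proof of Proposition~\ref{prop:Sa-acting-set} gives $|U_R|=|U_y|\cdot|\ann(y)|$, as required.

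The one point needing care — and the nearest thing to an obstacle — is the element $x=0$: there $\ann(0)=R\not\subseteq\mathrm{m}$ and $|U_0|\,|\ann(0)|=1\cdot\r=\r$, so the denominator of Corollary~\ref{cor:stat-dist} becomes $1-(1-\alpha)=\alpha$ rather than $1-\frac{(1-\alpha)|U_R|}{\r}$. Thus the displayed formula is to be read for $x\neq 0$, with $\pi(0)$ recovered either from the normalization $\sum_{x\in R}\pi(x)=1$ or by a single direct application of Corollary~\ref{cor:stat-dist} to $x=0$, exactly as in Example~\ref{eg:Zmod8}. Beyond this edge case, no estimates enter: the passage from Corollary~\ref{cor:stat-dist} to Corollary~\ref{cor:stat-dist-local-ring} is purely the above bookkeeping together with the cardinality identity.
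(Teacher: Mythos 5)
Your argument is the paper's argument — specialize Corollary~\ref{cor:stat-dist} via the cardinality identity $|U_y|\,|\ann(y)|=|U_R|$ — but you carry it out more carefully and, in doing so, catch a genuine flaw in the statement. The paper's proof asserts $|\ann(x)|\,|U_x|=|U_R|$ ``for all $x\in R$''; as you observe, the step $|(1+\ann(x))\cap U_R|=|\ann(x)|$ requires $\ann(x)\subseteq\mathrm m$, which fails precisely at $x=0$, where $\ann(0)=R$ and $|U_0|\,|\ann(0)|=|R|$. The displayed formula of the corollary is indeed false at $x=0$ unless $R$ is a field: for $R=\Z{8}$ and $\alpha=1/2$ it yields $16/81$, whereas Theorem~\ref{thm:stat-dist-finite-chain} gives $\pi(\bar 0)=8/27$. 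Your fix — read the formula for $x\neq 0$ and obtain $\pi(0)$ either from Corollary~\ref{cor:stat-dist} directly (whose denominator correctly degenerates to $\alpha$) or from normalization — is the right reading and should be incorporated into the hypothesis of the corollary. Your justification of the identity via the short exact sequence from Proposition~\ref{prop:Sa-acting-set} together with $1+\mathrm m\subseteq U_R$ also makes explicit what the paper leaves as a terse one-liner. (One small nit: Example~\ref{eg:Zmod8} states the stationary probabilities but does not actually exhibit the recovery of $\pi(0)$ you describe; citing Corollary~\ref{cor:stat-dist} alone would be cleaner.)
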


\begin{proof}
For a local ring,
\[
|U_{x}| = \frac{|U_R|}{|1+\ann(x)| \cap |U_R|} = \frac{|U_R|}{|\ann(x)|},
\]
which implies $|\ann(x)| |U_{x}| = |U_R|$ for all $x \in R$.
\end{proof}

\begin{rem}
\label{rem:irreversible}
Although the stationary distribution has a simple product structure, note that the Markov chains $(X_n)_{n \in \mathbb{Z}_+}$ and $(\Xu n)_{n \in \mathbb{Z}_+}$ are not reversible (see Definition~\ref{def:revers}). We illustrate this by comparing the stationary probabilities of the entries $0$ and $1$ in a finite chain ring for $(\Xu n)_{n \in \mathbb{Z}_+}$. Using Corollary~\ref{cor:stat-prob-units-special},
the ratio of the transitions between $1$ and $0$ are given by
\[
\frac{\mathbb{P}(0 \to 1)}{\mathbb{P}(1 \to 0)} = \frac{\alpha/\r}{\alpha/\r + (1-\alpha) \beta_0} = \frac{\alpha}{\alpha + (1-\alpha) \beta_0 \r}.
\]
but this is not equal to the ratio $\pi(1)/\pi(0)$.
\end{rem}

\subsection{Finite chain rings}
\label{sec:stat-dist-finite-chain}

It turns out that the stationary distribution can be described completely in the case of finite chain rings. We refer to Section~\ref{sec:finite-chain-ring} for terminology on finite chain rings. The poset of ideals of $R$ is a chain of height $k$. Every nonzero element $x$ in $R$ belongs to some $S_i$ for $0 \leq i \leq k$. 

\begin{thm} 
\label{thm:stat-dist-finite-chain}
The stationary distribution $\pi(x)$ for $x \in R$ 
in the chain $(\Xu n)_{n \in \mathbb{Z}_+}$ is given by
\begin{equation}\label{statdistpk}
\pi(x) = \begin{cases}
\ds \frac{\alpha}{q^{k-i-1}(1+(q-1)\,\alpha)^{i+1}}, & \text{ if $x \in S_i$ with $i<k$,} \\[0.5cm]
\ds \frac{1}{(1+(q-1)\,\alpha)^k},  & \text{if $x=0$.}
\end{cases}
\end{equation}
\end{thm}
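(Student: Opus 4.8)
The plan is to specialize Corollary~\ref{cor:stat-dist-local-ring} to a finite chain ring and solve the resulting recursion explicitly, using the very rigid ideal structure. Since a finite chain ring $R$ of length $k$ with residue field of size $q$ is local, write $\mathrm m = (\pi)$ and note that $\phi = \{1, \pi, \pi^2, \dots, \pi^k\}$ with $S_i = S_{\pi^i}$ the set of generators of $\mathrm m^i$ (so $S_k = \{0\}$). First I would record the combinatorial data: $|R| = q^k$, $|U_R| = (q-1)q^{k-1}$, and $|S_i| = |U_{\pi^i}| = |\mathrm m^i \setminus \mathrm m^{i+1}| = (q-1)q^{k-i-1}$ for $i<k$. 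By Proposition~\ref{prop:equal-stat-prob}, $\pi$ is constant on each $S_i$, so there is a single unknown $\pi_i := \pi(x)$ for $x \in S_i$, and the normalization reads $\sum_{i=0}^{k-1} (q-1)q^{k-i-1}\,\pi_i + \pi_k = 1$.

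Next I would run Corollary~\ref{cor:stat-dist-local-ring}. For $x \in S_i$, the principal ideals strictly containing $I_x = \mathrm m^i$ are exactly $\mathrm m^0, \dots, \mathrm m^{i-1}$, whose generators are $1,\pi,\dots,\pi^{i-1} \in \phi$. Writing $c := (1-\alpha)|U_R|/|R| = (1-\alpha)(q-1)/q$, the corollary gives the recursion
\[
\pi_i = \frac{\dfrac{\alpha}{q^k} + c \displaystyle\sum_{j=0}^{i-1} \pi_j}{1 - c}, \qquad 0 \le i \le k,
\]
with the convention that the sum is empty for $i=0$. The base case $\pi_0 = \dfrac{\alpha}{q^k(1-c)}$ should be checked against the claimed formula and against Corollary~\ref{cor:stat-prob-units-special}: indeed $q^k(1-c) = q^k - (1-\alpha)(q-1)q^{k-1} = q^{k-1}(1+(q-1)\alpha)$, matching $\pi_0 = \alpha/\big(q^{k-1}(1+(q-1)\alpha)\big)$ as in \eqref{statdistpk} for $i=0$.

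The main work is solving the recursion. Subtracting consecutive equations kills the constant $\alpha/q^k$ term and yields $(1-c)\pi_i - (1-c)\pi_{i-1} = c\,\pi_{i-1}$, i.e.\ $\pi_i = \dfrac{\pi_{i-1}}{1-c}$ for $1 \le i \le k$; hence $\pi_i = \pi_0 (1-c)^{-i}$. Since $1-c = q^{-1}(1+(q-1)\alpha)$, we get $(1-c)^{-i} = q^i(1+(q-1)\alpha)^{-i}$, so for $i<k$,
\[
\pi_i = \frac{\alpha}{q^{k-1}(1+(q-1)\alpha)} \cdot \frac{q^i}{(1+(q-1)\alpha)^{i}} = \frac{\alpha}{q^{k-i-1}(1+(q-1)\alpha)^{i+1}},
\]
which is exactly \eqref{statdistpk}. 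For $x=0$ (i.e.\ $i=k$) the same relation gives $\pi_k = \pi_0(1-c)^{-k} = \alpha q^{k-1}(1+(q-1)\alpha)^{-k}\cdot$ — wait, one must be careful: $\pi_k = \pi_0(1-c)^{-k} = \frac{\alpha}{q^{k-1}(1+(q-1)\alpha)}\cdot\frac{q^k}{(1+(q-1)\alpha)^k}\cdot(1+(q-1)\alpha) $ does not immediately simplify to the stated $1/(1+(q-1)\alpha)^k$, so the honest route is to \emph{not} use the recursion blindly at $i=k$ but instead determine $\pi_k$ from normalization (or note $S_k=\{0\}$ is a special leaf). The cleanest finish: take $\pi_i$ for $i<k$ as derived above, then fix $\pi_k$ by $\sum_{i=0}^{k-1}(q-1)q^{k-i-1}\pi_i + \pi_k = 1$. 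The sum telescopes after substituting: $(q-1)q^{k-i-1}\pi_i = \dfrac{(q-1)\alpha}{(1+(q-1)\alpha)^{i+1}}$, and $\sum_{i=0}^{k-1} \dfrac{(q-1)\alpha}{(1+(q-1)\alpha)^{i+1}}$ is a geometric series summing to $1 - (1+(q-1)\alpha)^{-k}$; therefore $\pi_k = (1+(q-1)\alpha)^{-k}$, as claimed. The only mild obstacle is bookkeeping the boundary index $i=k$ correctly — the recursion from Corollary~\ref{cor:stat-dist-local-ring} does hold at $i=k$ as an equation, but deriving $\pi_k$ via normalization is less error-prone and simultaneously confirms the whole solution is a genuine probability distribution, pinning it down uniquely by Proposition~\ref{prop:irred}.
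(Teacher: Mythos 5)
Your proof is correct and takes the same route as the paper's: specialize Corollary~\ref{cor:stat-dist-local-ring} to a finite chain ring, then solve the resulting recursion along the chain of ideals. Your subtraction trick giving $\pi_i = \pi_{i-1}/(1-c)$ is a slightly slicker way to arrive at the same geometric progression that the paper sums directly by induction, and your closing normalization check is a nice confirmation that the answer is a genuine probability vector. The one thing I would push back on is your parenthetical remark that ``the recursion from Corollary~\ref{cor:stat-dist-local-ring} does hold at $i=k$ as an equation.'' It actually does not: the derivation of that corollary replaces $|U_x|\,|\ann(x)|$ by $|U_R|$, which is valid for $x\neq 0$ (since then $\ann(x)\subseteq \mathrm{m}$ and so $1+\ann(x)\subseteq U_R$), but fails for $x=0$, where $|U_0|\,|\ann(0)| = 1\cdot|R| = |R|$; the denominator there should be $\alpha$, not $1-c$. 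The correct recursion step at $i=k$ is therefore $\pi_k = \pi_{k-1}/\alpha$, which does reproduce $(1+(q-1)\alpha)^{-k}$, but via a different divisor than the one you considered. Your decision to pin down $\pi_k$ by normalization instead neatly sidesteps this subtlety --- indeed more cleanly than the paper's own ``again using Corollary~\ref{cor:stat-dist-local-ring}'' --- so the proof as a whole stands, modulo that one inaccurate aside.
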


\begin{proof}
Since finite chain rings are also local, we use Corollary~\ref{cor:stat-dist-local-ring}. In this case, $\phi$ can be identified with $\{0,\dots,k\}$ with $0$ corresponding to units and $k$ to the zero element. For $i,j \in \phi$, $I_i \subsetneq I_j$ if and only if the corresponding integers satisfy $j<i$.
The case $i=0$ is already covered by Corollary~\ref{cor:stat-prob-units-special}. We prove the other cases for $i \leq k-1$ by induction. We obtain, for $x \in S_i$,
\begin{align*}
\pi(x) &= \frac{\alpha + (1-\alpha)u\sum_{j<i, y \in S_j} \pi(y)}
{q^{k-1}(1+(q-1)\,\alpha)}, \\
&= \pi(1) + \frac{(1-\alpha)(q-1)}{(1+(q-1)\,\alpha)} 
\sum_{j=0}^{i-1} \frac{\alpha}{q^{k-j-1}(1+(q-1)\,\alpha)^{j+1}},
\end{align*}
by the induction assumption. This is now a geometric series, which is easily summed to obtain the desired result. The case of $\pi(0)$ can be then explicitly evaluated again using Corollary~\ref{cor:stat-dist-local-ring}.
\end{proof}

\section{Mixing Time}
\label{sec:mixing-zpk}

As described in Section~\ref{sec:def}, irreducible and aperiodic Markov chains converge to their unique stationary distribution.  
In this section, we will be interested in the speed of this convergence. It is well-known (see, for example \cite[Theorem 4.9]{LevinPeresWilmer}) that the convergence is exponentially fast. But we would like to know how the constant in the exponent scales with the size of the ring. 
We will give an elementary probabilistic argument proving that the mixing time is a constant independent of the size of the ring for our most general Markov
chain $(X_n)_{n \in \mathbb{Z}_+}$.

We begin with the relevant definitions. Define a natural metric on the space of probability distributions on $\Omega$ as follows.
\begin{defn}
The {\em total variation distance} between two probability distributions $\mu$ and $\nu$ on $\Omega$ is given by
\[
|| \mu - \nu ||_{\text{TV}} = \frac{1}{2} \sum_{x \in \Omega} |\mu(x) - \nu(x)|.
\]
\end{defn}
Suppose we start the Markov chain at some $x \in \Omega$. Then we obtain for each $n \in \mathbb{N}$, a probability distribution on $\Omega$ simply by evolving the chain, which we call $M^n(x,\cdot)$. We will denote the distance at time $n$ between this distribution, maximized over $x$, and $\pi$ by
\begin{equation}
\label{tvdist}
d(n) = \max_{x \in \Omega} ||M^n(x,\cdot) - \pi(\cdot) ||_{\text{TV}}.
\end{equation}
Fix an $\epsilon < 1/2$ for technical reasons.

\begin{defn}
\label{def:mixing-time}
The {\em mixing time} of a Markov chain $(Y_n)_{n \in \mathbb{Z}_+}$ with stationary distribution $\pi$ is given by
\[
t_{\text{mix}}(\epsilon) = \min \{n \mid d(n) \leq \epsilon \}.
\]
\end{defn}

Roughly speaking, the mixing time is at least as large as the relaxation time
(see Definition~\ref{def:relax-time}). 
The precise apriori bounds for reversible chains are given in \cite[Theorems 12.3 and 12.4]{LevinPeresWilmer}. 
For reversible Markov chains (see Definition~\ref{def:revers}), there are an abundance of techniques to compute the mixing time \cite{aldous-fill-2002,LevinPeresWilmer}. As we have shown in Remark~\ref{rem:irreversible}, $(X_n)_{n \in \mathbb{Z}_+}$ is not reversible. 
However, we will be able to use coupling techniques to establish our main result. 

\begin{defn}
A {\em coupling of Markov chains} with transition matrix $M$ is a process 
$(Y_n, Z_n)_{n \in \mathbb{Z}_+}$ with the property that both $(Y_n)_{n \in \mathbb{Z}_+}$ and $(Z_n)_{n \in \mathbb{Z}_+}$ are Markov chains with transition matrix $M$ (with possibly different starting distributions).
\end{defn}

Let $(Y_n, Z_n)_{n \in \mathbb{Z}_+}$ be a coupling and 
$\tauc$ be the first time the chains meet, i.e.
\begin{equation}
\label{coupling-time}
\tauc = \min \{n \mid Y_n = Z_n \}.
\end{equation}
Let $\mathbb{P}_{y,z}$ be the probability for the coupling where $Y_0 = y$ and $Z_0 = z$. The usefulness of coupling is that knowledge of $\tauc$ gives a useful bound for the mixing time.  
The precise result that we will use is the following.

\begin{thm}[{\cite[Corollary 5.3]{LevinPeresWilmer}}]
\label{thm:coupling-time}
Let $(Y_n, Z_n)_{n \in \mathbb{Z}_+}$ be a coupling and $\tauc$ be the coupling time as defined in \eqref{coupling-time}. Then
\[
d(n) \leq \max_{y,z \in \Omega} \mathbb{P}_{y,z} (\tauc > n).
\]
\end{thm}

We are now in a position to prove our mixing time bound.

\begin{proof}[Proof of Theorem~\ref{thm:mixing-time}]
\footnote{We are grateful to M. Krishnapur for suggesting this proof.}
We now describe the coupling for our Markov chain $(X_n)_{n \in \mathbb{Z}_+}$ that will prove our result. Let $(X^{(1)}_n, X^{(2)}_n)_{n \in \mathbb{Z}_+}$ be a coupling of two samples of $(X_n)_{n \in \mathbb{Z}_+}$ starting at $x^{(1)}_0, x^{(2)}_0 \in R$ respectively. 

Suppose we have run the joint chain up to time $n$ and they have not yet coupled.
We first toss a common coin with Heads probability $\alpha$ for both samples. If the coin lands Tails, we choose two independent elements $y_1, y_2$ according to the $\beta$ distribution defined in \eqref{cond-dist} and set $x^{(1)}_{n+1} = x^{(1)}_n \times y_1$, $x^{(2)}_{n+1} = x^{(2)}_n \times y_2$. That is, both chains move independently.
If the coin land Heads, we sample a uniformly random element $z \in R$. We then set $x^{(1)}_{n+1} = x^{(1)}_n + z$ and $x^{(2)}_{n+1} = x^{(2)}_n + \left(x^{(1)}_n + z - x^{(2)}_n \right)$. This is a valid coupling because $\left(x^{(1)}_n + z - x^{(2)}_n \right)$ is uniformly random in $R$ if $z$ is. At this point, 
$x^{(1)}_{n+1} = x^{(2)}_{n+1}$. It is easy to ensure that both $X^{(1)}$ and $X^{(2)}$ remain coupled for all future time by performing the same procedure for both at each future step.

As a consequence of this coupling procedure, the probability that $X^{(1)}$
and $X^{(2)}$ do not remain coupled up to time $n$ is a geometric random variable with success probability $\alpha$. That is, $\mathbb{P}(\tauc = n) = (1 - \alpha)^{n-1} \alpha$ for $n \in \mathbb{N}$.
Supposing that $\alpha < 1$, we thus obtain
\[
\mathbb{P}_{x^{(1)}_0, x^{(2)}_0} (\tauc > n) \leq (1 - \alpha)^n.
\]
The right hand side is independent of the initial conditions, and we obtain from Theorem~\ref{thm:coupling-time} that $d(n) \leq (1 - \alpha)^n$.
From Definition~\ref{def:mixing-time}, we find
\[
t_{\text{mix}}(\epsilon) \leq \left\lceil \frac{\log \epsilon}{\log (1 - \alpha)} \right\rceil.
\]
In the extreme case that $\alpha$ is equal to $1$, the Markov chain is the random walk on the complete graph on $|R|$ vertices. In that case, it is well-known that it mixes in one step. These two cases can be unified by adding an extra step, completing the proof.
\end{proof}

\section{Open Questions}
\label{sec:open}

In this work, we have studied algebraic and probabilistic properties of two natural Markov chains on a finite commutative ring. When the multiplication probabilities are uniform, several pertinent questions about the stationary distribution remain unanswered.
In particular, one can consider the least common denominator of the stationary probabilities, informally called the {\em partition function}. For instance, the partition function for the finite chain rings studied in Section~\ref{sec:stat-dist-finite-chain} is given, using Theorem~\ref{thm:stat-dist-finite-chain}, by 
\[
q^{k-1} (1 + (q-1) \alpha)^k.
\]
In all the cases that we have looked at, the partition function factorizes completely in terms of factors linear in $\alpha$. Why this factorization happens is an open question.
A natural class of rings for which more refined results should be available are the integer rings $\Z{m}$. In the case of squarefree integers, we have the following empirical observation. Suppose $m = p_1 \cdots p_k$, where $p_i$'s are primes. For $S \subset \{1,\dots,k\}$, let $m_S = \prod_{i \in S} p_i$ and $u_S = \prod_{i \in S} (p_i-1)$. Then the partition function for $(\Xu n)_{n \in \mathbb{Z}_+}$ on $\Z{m}$ seems to be
\[
\prod_{\emptyset \neq S \subset \{1,\dots,k\}} \left( m_S - u_S + u_S \alpha \right).
\]
We have proved analogous results about similar Markov chains on noncommutative rings have appeared in \cite{AS-2018}. The determination of the partition function for such chains is completely open.

In our proof of the upper bound for the mixing time, we have only used the additive structure of the ring. It is likely that one can prove even faster mixing by taking into account the multiplicative transitions. It might be an interesting problem to understand this mixing better.

\section*{Acknowledgements}
We are very grateful to the anonymous referees for many constructive suggestions.
We would also like to thank M. Krishnapur and B. Steinberg for enlightening discussions.
The authors would like to acknowledge support in part by a UGC Centre for Advanced Study grant. 
The first author (AA) would like to acknowledge support from Department of Science and Technology grants DST/INT/SWD/VR/P-01/2014 and EMR/2016/006624.

\bibliographystyle{plain}
\bibliography{ringpapers}

\end{document}